\theoremstyle{plain} % definition 
\newtheorem{theorem}{Theorem}[section] 
\newtheorem{lemma}[theorem]{Lemma} 
\newtheorem{corollary}[theorem]{Corollary}
\theoremstyle{definition}
\theoremstyle{remark}
\newtheorem{remark}[equation]{Remark}
\numberwithin{equation}{section}
\title[Averages along Primes] {Averages Along the Primes: Improving and Sparse Bounds}
\author[Han] {Rui Han} 
\thanks{RH: Research supported in part by grant National Science Foundation grant DMS-1800689.}
\address{ School of Mathematics, Georgia Institute of Technology, Atlanta GA 30332, USA}
\email {rui.han@math.gatech.edu}
\author[Krause] {Ben Krause}
\address{Department of Mathematics, Princeton University, Princeton NJ 08544}
\author[Lacey] {Michael T. Lacey} 
\thanks{MTL: Research supported in part by grant National Science Foundation grant DMS-1600693, and by Australian Research Council grant DP160100153. }
\address{ School of Mathematics, Georgia Institute of Technology, Atlanta GA 30332, USA}
\email {lacey@math.gatech.edu}
\author[Yang]{ Fan Yang}
\thanks{FY: Research supported in part by grant National Science Foundation grant DMS-1600693 and AMS-Simons Travel grant 2019-2021.}
\address{ School of Mathematics, Georgia Institute of Technology, Atlanta GA 30332, USA}
\email {fan.yang@math.gatech.edu}
\DeclareRobustCommand\widecheck[1]{{\mathpalette\@widecheck{#1}}}
\def\@widecheck#1#2{%
    \setbox\z@\hbox{\m@th$#1#2$}%
    \setbox\tw@\hbox{\m@th$#1%
       \widehat{%
          \vrule\@width\z@\@height\ht\z@
          \vrule\@height\z@\@width\wd\z@}$}%
    \dp\tw@-\ht\z@
    \@tempdima\ht\z@ \advance\@tempdima2\ht\tw@ \divide\@tempdima\thr@@
    \setbox\tw@\hbox{%
       \raise\@tempdima\hbox{\scalebox{1}[-1]{\lower\@tempdima\box
\tw@}}}%
    {\ooalign{\box\tw@ \cr \box\z@}}}
\begin{document}

\begin{abstract}
Consider  averages along the prime integers $ \mathbb P $ given by 
\begin{equation*}
\mathcal{A}_N f (x)  = N ^{-1}  \sum_{ p \in \mathbb P \;:\; p\leq N}  (\log p) f (x-p). 
\end{equation*}
These averages satisfy a uniform scale-free $ \ell ^{p}$-improving estimate. For all $ 1< p < 2$, there is a constant $ C_p$ so that for all integer $ N$ and functions $ f$ supported on $ [0,N]$, there holds 
\begin{equation*}
N ^{-1/p' }\lVert \mathcal{A}_N f\rVert _{\ell^{p'}} \leq C_p  N ^{- 1/p} \lVert f\rVert _{\ell^p}.
\end{equation*}
The maximal function $ \mathcal{A}^{\ast}  f =\sup _{N} \lvert \mathcal{A}_N f \rvert$ satisfies $ (p,p)$ sparse bounds for all $ 1< p < 2$. The latter are  the natural variants of the scale-free bounds.   
As a corollary,  $  \mathcal{A}^{\ast}  $ is bounded on $ \ell ^{p} (w)$, for all weights $ w$ in the Muckenhoupt $A_p$ class. No prior weighted inequalities for $ \mathcal{A}^{\ast} $ were known. 
\end{abstract}

	\maketitle  
\tableofcontents 

%%%%%%%%%%%%%%%%%%%%%%%%%%%%%% SECTION  SECTION SECTION
%%%%%%%%%%%%%%%%%%%%%%%%%%%%%% SECTION  SECTION SECTION 
\section{Introduction} %\label{s:}
	
Let $ \mathbb P = \{3, 5, 7 ,\dotsc, \}$ be the odd primes and define the logarithmically weighted 
 averages along the primes by 
\begin{align*}
\mathcal{A}_N f (x) & = N ^{-1}  \sum_{ p \in \mathbb P \;:\; p\leq N}  (\log p) f (x-p), 
\end{align*}
We prove scale-free $ \ell ^{p}$ improving bounds for these averages, and sparse bounds for the associated maximal function 
\begin{equation} \label{e:A*}
\mathcal{A} ^{\ast} f = \sup _{N}  \lvert \mathcal{A}_N f \rvert . 
\end{equation}

For a function $f$ on $\mathbb Z $, and an interval $I\subset \mathbb Z $, define 
\begin{align}\label{def:p_norm}
\langle f\rangle_{I,p}:=\left(\frac{1}{|I|} \sum_{x\in I} |f(x)|^p\right)^{1/p}
\end{align}
to be the normalized $\ell^p$ norm on $I$.
Throughout the paper, if $I=[a,b]\cap \mathbb Z$, with $a,b\in \mathbb Z$, is an interval on $\mathbb Z$,  let $2I=[2a-b-1, b]\cap \mathbb Z$ be the doubled interval (on the left-hand side), let $3I=[2a-b-1,2b-a+1]$ be the tripled interval which has the same center as $I$. 
 
We prove that the averages along the primes improve integrability, uniformly over all scales. 

%%%%%%%%%%%%%%%%%%%%%%%%%%%%%% THEOREM THEOREM THEOREM
\begin{theorem}\label{t:fixed}  
%%  ENUMERATE
 For $ 1< p < \infty $, there is a constant $ C_p$ so that for all integers $ N$, 
and   interval $ I$ of length $ N$, there holds for all functions $ f$, 
\begin{equation}\label{e:fixed}
\langle  \mathcal{A}_N f  \rangle _{I,p'} \leq C_p \langle f \rangle _{2I, p} , 
\end{equation}
where $ p' = \frac{p} {p-1}$.  
\end{theorem}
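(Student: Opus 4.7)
The plan is to establish \eqref{e:fixed} via the Hardy--Littlewood circle method, decomposing the convolution kernel $\mu_N$ of $\mathcal{A}_N$ into a major-arc piece comparable to the smooth box average and a minor-arc piece controlled by Vinogradov's exponential-sum estimate over primes. I would first reduce to the range $1<p<2$: the case $p\ge 2$ is a consequence of the $p=2$ inequality (itself immediate from Young, since $\|\mu_N\|_{\ell^1}=N^{-1}\sum_{p\le N}\log p\lesssim 1$ by the prime number theorem) together with H\"older applied on both sides of \eqref{e:fixed}. By translating we may take $I=\{1,\dots,N\}$, so that the target reads $\|\mathcal{A}_N f\|_{\ell^{p'}}\le C_p\,N^{1-2/p}\|f\|_{\ell^p}$ for $f$ supported in $2I$; this is exactly what Young's inequality yields for the smooth box average $\nu_N=N^{-1}\mathbf 1_{[1,N]}$.

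Fix $B=B(p)$ large, set $Q=(\log N)^B$, and define the major arcs $\mathfrak M=\bigcup_{q\le Q}\bigcup_{(a,q)=1}[\tfrac{a}{q}-\tfrac{Q}{N},\tfrac{a}{q}+\tfrac{Q}{N}]\subset\mathbb T$. Decompose $\mu_N=M_N+E_N$ with $\widehat{M_N}=\widehat{\mu_N}\mathbf 1_{\mathfrak M}$. Vinogradov's estimate gives $\|\widehat{E_N}\|_\infty\le C_B(\log N)^{-B}$, so by Plancherel $\|E_N*f\|_{\ell^2}\le C_B(\log N)^{-B}\|f\|_{\ell^2}$; pairing this with the trivial bound $\|E_N*f\|_{\ell^\infty}\lesssim (\log N)/N\cdot\|f\|_{\ell^1}$ and interpolating by Riesz--Thorin at $\theta=2/p'$ produces an $\ell^p\to\ell^{p'}$ norm of order $(\log N)^{(1-\theta)-B\theta}\,N^{1-2/p}$, which is $\lesssim N^{1-2/p}$ once $B\ge(1-\theta)/\theta$. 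For the major-arc operator I would use the classical asymptotic
\begin{equation*}
\widehat{\mu_N}(\xi)\approx \frac{\mu(q)}{\phi(q)}\,\widehat{\nu_N}(\xi-a/q)\qquad\text{on}\quad \bigl[\tfrac{a}{q}-\tfrac{Q}{N},\tfrac{a}{q}+\tfrac{Q}{N}\bigr],
\end{equation*}
with error $O((\log N)^{-B'})$ that can be absorbed into $E_N$ by enlarging $B$. This writes $M_N*$ as a superposition of modulated convolutions with $\nu_N$ weighted by $\mu(q)/\phi(q)$; Young's inequality gives the target $\ell^p$-improving bound $\|\nu_N*g\|_{\ell^{p'}}\le N^{1-2/p}\|g\|_{\ell^p}$ for each modulated piece, and almost-orthogonality of the disjoint Fourier supports at the rationals $a/q$ sums them with a constant independent of $N$.

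The main obstacle is the major-arc analysis: summing the pieces indexed by $q\le Q=(\log N)^B$ with a constant uniform in $N$. A crude triangle inequality costs $\sim \log Q\sim \log\log N$ from $\sum_{q\le Q}|\mu(q)|/\phi(q)$, so one has to invoke the almost-orthogonality afforded by the disjoint Fourier supports at the different rationals, or alternatively exploit cancellation in the Ramanujan-sum coefficients $c_q(n)$ that arise in the physical-space expansion of $M_N$. The minor-arc side, by contrast, is essentially automatic once Vinogradov's bound with arbitrary log-saving is available, so the technical heart of the argument is calibrating the major-arc cutoff and handling the $q$-summation.
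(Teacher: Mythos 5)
Your reduction to $1<p<2$ and your setup of the circle-method decomposition are sound, but the proposal contains a genuine gap precisely where you flag it yourself: the major-arc $q$-summation. The assertion that ``almost-orthogonality of the disjoint Fourier supports at the rationals $a/q$ sums them with a constant independent of $N$'' is not correct for $\ell^p\to\ell^{p'}$ with $p<2$. Disjoint frequency supports give genuine orthogonality only on $\ell^2$; at the $\ell^1\to\ell^\infty$ endpoint the kernel of $M_N$ is a sum over roughly $\log Q$ scales and its sup-norm grows like $Q\cdot N^{-1}$ (up to logs), not $N^{-1}$. Interpolating the good $\ell^2$ bound with this degraded $\ell^\infty$ bound does not remove the $q$-dependence; you are left with a power of $\log Q\sim\log\log N$, which is exactly the non-uniformity you were trying to kill. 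Relatedly, the trivial bound you invoke for the minor arcs, $\|E_N\|_{\ell^\infty}\lesssim(\log N)/N$, is not justified once $M_N$ is subtracted off, because $M_N$ itself does not obey this sup-norm bound.

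The paper resolves the $q$-summation not by orthogonality but by a different mechanism that your sketch does not contain. First, it reduces (by openness in $p$) to indicator functions $f=\mathbf 1_F$, $g=\mathbf 1_G$, and disposes of the sparse-density case $\min\{\langle f\rangle_{2E,1},\langle g\rangle_{E,1}\}\le(\log N)^{-p'}$ by the trivial $(\log N)$-loss bound. Second, rather than fixing the cutoff at $Q=(\log N)^B$, it introduces a free parameter $J\le(\log N)^{p'}$, puts all of $q>J$ together with the genuine minor arcs into a High term controlled on $\ell^2$ with gain $J^{-1+1/p'}$, and leaves only $q\le J$ in the Low term; $J$ is then optimized against the densities of $f$ and $g$, a la \eqref{e:equal}. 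This density-dependent optimization is the structural idea that is absent from your sketch. Third, the Low term with $q\le J$ is handled in physical space via the Ramanujan-sum bound of Lemma~\ref{l:Ram}, which shows $\frac{1}{M}\sum_{|x|<M}\bigl|\sum_{q\le J}c_q(x)/\phi(q)\bigr|^{p'}\lesssim J^{\epsilon p'}$, not the crude $\sum_{q\le J}\phi(q)^{-1}\sim\log J$; that cancellation (together with H\"older in $\ell^p$--$\ell^{p'}$ over a window of length $\sim N$) is what produces the sharp $J^{1/p'}\langle f\rangle_{2E,1}^{1/p}$ in \eqref{e:FSL}. You name Ramanujan sums as an ``alternative'' but do not develop them; in fact they are the technical heart of the proof, and the Fourier-side almost-orthogonality route you lean on does not substitute for them.

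In short: your minor-arc treatment is essentially right in spirit but has an unjustified $\ell^\infty$ bound; your major-arc treatment is incomplete, and the two ideas that actually close the argument in the paper --- the density-dependent choice of the cutoff $J$ and the $\ell^{p'}$-average Ramanujan-sum estimate --- are missing.
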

%%%%%%%%%%%%%%%%%%%%%%%%%%%%%% THEOREM THEOREM THEOREM

We turn to the sparse inequalities.  They are the natural 
extensions of the $ \ell ^{p}$ improving inequalities above for the maximal function \eqref{e:A*}.  
 We say that a sublinear operator $ B$  has sparse type $ (r,s)$, for $ 1< r, s < \infty $ if there is a constant $ C$ so that 
for all finitely supported functions $ f,g$ there are a sparse collection of intervals $ \mathcal S $ so that 
\begin{equation}  \label{e:B}
\lvert  ( Bf,g )\rvert \leq C \sum_{I\in \mathcal S} \langle f \rangle _{2I,r} \langle g \rangle _{I,s} \lvert  I\rvert,
\end{equation}
where $(f,g)$ is the standard inner product on $\ell^2(\mathbb Z)$.
A collection of intervals $ \mathcal S$ is said to be \emph{sparse} if there are subsets $ E_I \subset I$ for $ I\in \mathcal S$ which are 
pairwise disjoint, and satisfy $ \lvert  E_I\rvert > \frac{1} {10} \lvert  I\rvert  $.

%%%%%%%%%%%%%%%%%%%%%%%%%%%%%% THEOREM THEOREM THEOREM
\begin{theorem}\label{t:sparse}
The maximal operator $ \mathcal{A}^{\ast} $    is of sparse type $ (r,s)$, for all $ 1< r,s < 2$. 

\end{theorem}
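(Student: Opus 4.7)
My plan is to derive Theorem~\ref{t:sparse} from Theorem~\ref{t:fixed} by the standard two-step procedure for converting an $\ell^p$-improving estimate into a sparse bound. I would first linearize the supremum, writing $\mathcal{A}^{\ast}f(x) = \mathcal{A}_{N(x)}f(x)$ for a measurable selector $N$ and rounding $N(x)$ to the nearest dyadic integer at the cost of a factor of two. Combining Theorem~\ref{t:fixed} with H\"older's inequality and the monotonicity of normalized $\ell^p$-averages, I would produce the single-scale bilinear estimate: for $1 < r, s < 2$, any interval $I$ of length $N$, and any functions $f, g$ supported in $2I$ and $I$ respectively,
\begin{equation*}
|(\mathcal{A}_N f, g)| \leq C_{r,s} \, \langle f \rangle_{2I, r} \, \langle g \rangle_{I, s} \, |I|,
\end{equation*}
by applying Theorem~\ref{t:fixed} with exponent $p = \min(r, s)$ and absorbing the other exponent via nesting of $\ell^p$-averages on a fixed interval.

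The second and technically delicate step is to upgrade this \emph{single-scale} estimate to a \emph{single-annulus} one, in which $\mathcal{A}_N$ is replaced by $\mathcal{A}^{\ast}_{[M,2M]}f(x) := \sup_{M \leq N < 2M} |\mathcal{A}_N f(x)|$. My anticipated route is a numerical Sobolev / Rademacher--Menshov embedding bounding the supremum over $N \in [M, 2M]$ by an $\ell^2$-sum of consecutive differences $\mathcal{A}_{N+1} - \mathcal{A}_N$, each admitting a single-scale improving bound from Theorem~\ref{t:fixed}; the logarithmic losses from this embedding would be absorbed by the open range of allowed $(r, s)$. An alternative would be to interpolate Theorem~\ref{t:fixed} against Bourgain's $\ell^p$-boundedness of $\mathcal{A}^{\ast}$ to obtain an annulus bound with an $\epsilon$-loss in the exponents. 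This upgrade is the main obstacle, since it goes beyond the fixed-scale statement of Theorem~\ref{t:fixed} and likely requires the minor-arc / Ramanujan-sum analysis that underlies Bourgain's pointwise ergodic theorem along the primes.

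With the single-annulus estimate in hand, the sparse bound would be produced by a Calder\'on--Zygmund stopping-time construction. Fix finitely supported $f, g$ and a large dyadic $I_0 \supset \operatorname{supp}(f) \cup \operatorname{supp}(g)$, and start with $\mathcal{S}_0 = \{I_0\}$. For each $Q$ already in the sparse family, select as stopping children the maximal dyadic $Q' \subset Q$ satisfying $\langle f \rangle_{3Q', r} > C_0 \langle f \rangle_{3Q, r}$ or $\langle g \rangle_{Q', s} > C_0 \langle g \rangle_{Q, s}$; for $C_0$ sufficiently large, the weak-type $(1,1)$ of the dyadic maximal function gives $\sum_{Q'} |Q'| \leq |Q|/2$, so $E_Q := Q \setminus \bigcup Q'$ captures at least half of $Q$ and the family is sparse. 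I would then partition scales among sparse intervals by assigning each $(x, N(x))$ to the unique $Q \in \mathcal{S}$ with $|Q| \sim N(x)$ and $x \in E_Q$; on each such $Q$ the linearized pairing is controlled by the single-annulus bilinear estimate with averages frozen at $\langle f \rangle_{3Q, r}$ and $\langle g \rangle_{Q, s}$ thanks to the stopping construction, and summing over $Q \in \mathcal{S}$ finishes the proof.
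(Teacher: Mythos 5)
Your outline correctly identifies the hard step, and you correctly flag it as the main obstacle: passing from the fixed-scale improving estimate of Theorem~\ref{t:fixed} to control of the supremum over a range of scales. But neither proposed route closes the gap. The Rademacher--Menshov route fails because the consecutive differences $\mathcal{A}_{N+1}f - \mathcal{A}_N f$ are, up to a small renormalization of $\vartheta(N)$, weighted Dirac masses at the prime $N+1$; they have no $\ell^p$-improving structure, so an $\ell^2$-sum of differences cannot be closed with the fixed-scale estimate. The interpolation route, pairing the trivial $\ell^1\to\ell^\infty$ bound for the annulus maximal operator against its $\ell^2\to\ell^2$ bound (inherited from $\mathcal{A}^{\ast}$), does produce the correct power of $N$ but carries a factor of a power of $\log N$, and that loss is unbounded over the scales $|Q|$ occurring in the stopping family; the openness of the exponent range does not absorb a loss that grows with $|Q|$ without an additional "small-density" reduction that your plan does not make.

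The paper never proves a single-annulus improving bound. It proves directly an estimate for the stopping-time operator $\mathcal{A}_{\tau}$ with $\tau$ \emph{admissible} in the sense of \eqref{e:admiss} (this is Lemma~\ref{l:admiss}), which is the localized form of the sparse inequality; the recursion from Lemma~\ref{l:admiss} to Theorem~\ref{t:sparse} is the cited standard argument, not the scale-assignment scheme you describe, which in any case is not well posed since for $x\in E_Q$ the linearizing scale $N(x)$ need not be comparable to $|Q|$. The crucial ingredient absent from your plan is Bourgain's multi-frequency maximal inequality. In the High-Low split $\mathcal{A}_{\tau}f\leq H+L$, the High term contains a genuine supremum over all scales $n$, and \eqref{e:multi} is exactly the tool that controls $\sup_n\lvert\sum_{2^k<q\leq 2^{k+1}}L_{2^n,q}\ast f\rvert$ in $\ell^2$ with gain $2^{-k(1-1/p')}$, which sums over $k\geq j$ to give \eqref{e:SpH}. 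The Low term is estimated pointwise from the explicit inversion formula \eqref{e:invert} together with the Ramanujan-sum moment bound \eqref{e:Ram<}, and admissibility of $\tau$ is what converts the $\ell^1$ average of $f$ at scale $\tau(x)$ into $\langle f\rangle_{2E,1}$. In short: the missing idea is that Bourgain's multi-frequency maximal inequality, not any annulus-by-annulus bookkeeping, is what controls the supremum; without it your plan does not go through.
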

%%%%%%%%%%%%%%%%%%%%%%%%%%%%%% THEOREM THEOREM THEOREM

This statement is much stronger than just asserting that $ \mathcal{A}^{\ast} $ is bounded on $ \ell ^{p}$, for all $ 1< p < \infty $. It implies 
for instance these weighted inequalities, which  match the classical result of Muckenhoupt for the ordinary maximal function.  
(Although the quantitative estimates of the norm will not match.)

%%%%%%%%%%%%%%%%%%%%%%%%%%%%%% COROLLARY COROLLARY COROLLARY
\begin{corollary}\label{c:wtd} For any $ 1< p < \infty $, and any weight $ w$  in the Muckenhoupt class $ A_p$, we have that $ \mathcal{A}^{\ast} $ 
is a bounded operator on  $ \ell ^{p} (w)$. 
\end{corollary}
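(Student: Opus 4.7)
The proof plan is to follow the standard template for converting sparse bounds into weighted norm inequalities. By duality, boundedness of $\mathcal{A}^\ast$ on $\ell^p(w)$ is equivalent to the bilinear estimate
\begin{equation*}
|(\mathcal{A}^\ast f, g)| \lesssim \|f\|_{\ell^p(w)}\, \|g\|_{\ell^{p'}(\sigma)}, \qquad \sigma := w^{1-p'}.
\end{equation*}
The openness of the Muckenhoupt classes is the first ingredient: given $w \in A_p$ with $1 < p < \infty$, there exists $\eta > 0$ such that $w \in A_{p/r}$ and $\sigma \in A_{p'/s}$ for all $1 < r, s < 1+\eta$. Any such pair $(r,s)$ lies in the range $(1,2)^2$ covered by Theorem~\ref{t:sparse}, and I would fix one.

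With this choice, Theorem~\ref{t:sparse} yields, for every finitely supported pair $f, g$, a sparse family $\mathcal S$ so that
\begin{equation*}
|(\mathcal{A}^\ast f, g)| \leq C \sum_{I \in \mathcal S} \langle f \rangle_{2I, r}\,\langle g \rangle_{I, s}\, |I|.
\end{equation*}
The remaining task is the now-textbook estimate of this sparse bilinear form by $\|f\|_{\ell^p(w)}\|g\|_{\ell^{p'}(\sigma)}$. The standard route: replace $|I|$ by $10|E_I|$ using the disjointness of the major subsets $E_I \subset I$; for $x \in E_I$ bound $\langle f\rangle_{2I,r} \lesssim M_w^{(p)} f(x)$ (licensed by $w \in A_{p/r}$) and $\langle g\rangle_{I,s} \lesssim M_\sigma^{(p')} g(x)$ (licensed by $\sigma \in A_{p'/s}$); then apply H\"older's inequality in $x$ and invoke $\ell^p(w)$- resp.\ $\ell^{p'}(\sigma)$-boundedness of the weighted maximal operators, which is Muckenhoupt's theorem.

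There is no genuine obstacle here: the sparse-to-weighted passage is by now a completely standard template in the sparse-domination literature. The only point requiring any care is confirming that the openness of $A_p$ for every $p \in (1,\infty)$ produces a pair $(r,s) \in (1,2)^2$. This is automatic, because openness forces $r$ and $s$ down toward $1$ and the constraints $r < p$, $s < p'$ never push the exponents above $2$; the sparse window $1 < r, s < 2$ furnished by Theorem~\ref{t:sparse} is precisely what the weighted theory requires. Consequently the entire deduction is mechanical once Theorem~\ref{t:sparse} is in hand.
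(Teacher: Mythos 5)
The paper states Corollary~\ref{c:wtd} without proof, treating it as a known consequence of sparse domination, so your task was to reconstruct the standard deduction; you did, and the route you chose (duality, openness of $A_p$ to land $(r,s)\in(1,2)^2$, pointwise domination of the sparse averages by maximal functions, then Muckenhoupt) is exactly the argument the authors are invoking. Your identification of the openness of $A_p$ as the precise mechanism that makes the window $1<r,s<2$ from Theorem~\ref{t:sparse} sufficient is the correct observation.

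One notational point deserves tightening. When you write $\langle f\rangle_{2I,r}\lesssim M_w^{(p)}f(x)$ ``licensed by $w\in A_{p/r}$,'' the cleanest way to make this rigorous is to take the \emph{unweighted} $r$-maximal function $M^{(r)}f:=\bigl(M(|f|^r)\bigr)^{1/r}$, which dominates $\langle f\rangle_{2I,r}$ for every $x\in 2I\supset E_I$ with no hypothesis on $w$ at all; the role of $w\in A_{p/r}$ is then to ensure (via Muckenhoupt applied to $M$ on $\ell^{p/r}(w)$) that $M^{(r)}$ is bounded on $\ell^p(w)$. If one instead reads $M_w^{(p)}$ as the $w$-weighted $p$-maximal operator $\bigl(M_w(|\cdot|^p)\bigr)^{1/p}$, this fails to close: $M_w$ is only of weak type $(1,1)$ on $\ell^1(w)$, so its $p$-th power version is not strong-type on $\ell^p(w)$. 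The fix costs nothing beyond the openness you have already invoked, but your phrasing mislocates where the $A_{p/r}$ condition is used (it licenses the boundedness of the maximal operator, not the pointwise domination).
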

%%%%%%%%%%%%%%%%%%%%%%%%%%%%%%  COROLLARY COROLLARY COROLLARY

We remark that for the simple averages along the primes, one can check that for non-negative $ f$
\begin{equation*}
\sup _{N}  \frac{\log N} N \sum_{ p \in \mathbb P \;:\; p\leq N} f (x-p ) \lesssim \mathcal{A}^{\ast} f .
\end{equation*}
Therefore, the sparse bounds hold for the maximal function on the left.  
Our argument for the  fixed scale inequalities \eqref{e:fixed} requires the logarithmic averages. 

\bigskip 
Following Bourgain's work on arithmetic ergodic theorems \cite{MR937581}, Wierdl \cite{MR995574} showed that $ \mathcal{A}^{\ast} $ 
is bounded on $ \ell ^{p}$ for all $ 1< p < \infty $. At the time, this was the first arithmetic example for which this fact was known for all $ 1< p < 2$. 
Bourgain's work \cite{MR1019960} gave a comprehensive approach to the $ \ell ^{p}$ theory of arithmetic averages.   
The subject continues to be under development, with important contributions by \cites{MR2188130,MR3681393,2015arXiv151207518M}. 
We point to the work of Mirek-Trojan and Trojan \cites{2019arXiv190704753T,MR3370012} also focused on the primes. The methods therein 
are different from those of this paper.  

Our subject, developing the $ \ell ^{p}$-improving properties and sparse bounds started with \cite{MR3892403}, 
and continued in  \cites{180409260H,180409845L}.  It now encompasses the discrete spherical maximal operators \cites{2018arXiv181002240K,181012344K,180509925,180906468}, as well as the square integers \cite{2019arXiv190705734H}. 

\medskip 
We use  the High Low Method  \cites{2019arXiv190705734H,181002240,I}.  This depends upon efficient use of $ \ell ^{2}$-methods, followed by a fine analysis of 
certain $ \ell ^{1}$-type expressions. The latter are frequently the most intricate part. In this argument, they depend upon a relatively 
accessible property of Ramanujan sums, Lemma~\ref{l:Ram}.  Our argument is new, even if one is only interested in the $ \ell ^{p} \to \ell ^{p}$ 
bounds for $\mathcal{A}^{\ast }$.

%%%%%%%%%%%%%%%%%%%%%%%%%%%%%% SECTION  SECTION SECTION
%%%%%%%%%%%%%%%%%%%%%%%%%%%%%% SECTION  SECTION SECTION 

\section{Preliminaries} %\label{s:}

Throughout, let $\phi(q)$ be the Euler totient function, let $\mu(q)$ be the M\"obius function.
The following estimate for $\phi(q)$ is well known:
\begin{align}\label{e:phi}
\phi(q)\gtrsim_{\varepsilon} q^{1-\varepsilon}.
\end{align}
We count primes in the standard logarithmic fashion.   
Put  
\begin{align}\label{e:Pi}
\vartheta (N) &= \sum_{p\in \mathbb P \;:\; p\leq N} \log P.
\end{align}
By the prime number theorem
\begin{align}\label{e:Pi_est}
\left| \frac{\vartheta (N)-N}{N}\right| \leq C e^{-c\sqrt{log N}},
\end{align}
holds for some constant $c, C>0$. This obviously implies $\vartheta (N)\sim N$.

%\vartheta (N, a, q) &= \sum_{\substack{p\in \mathbb P \;:\; p\leq N\\ p \equiv a \ \textup{mod} q}} \log P ,  \qquad  (a,q)=1. 

%We need properties of these counting functions. It is here that we quantify our conditional statements. 

%%%%%%%%%%%%%%%%%%%%%%%%%%%%%% THEOREM THEOREM THEOREM
%\begin{theorem}\label{t:log} For any integer $ A>1$, we have the inequality below, uniformly in $ N$
%\begin{equation}\label{e:SW}
%\max _{q \leq (\log N) ^{A}}  \max _{(a,q)=1}  \lvert  \vartheta (N, a,q) - N/\phi  (q)\rvert \lesssim N (\log N) ^{-A}.   
%\end{equation}
%\end{theorem}
%%%%%%%%%%%%%%%%%%%%%%%%%%%%%% THEOREM THEOREM THEOREM

We now redefine the averaging operators $ A_N$, by setting 
\begin{equation}\label{e:redefine}
\mathcal{A}_N f (x) = \vartheta (N) ^{-1} \sum_{p\in \mathbb P \;:\; p\leq N} (\log p)\, f (x-p)
\end{equation}
As this is a positive operator, there is no harm in this new definition.

The Fourier transform of a measure $ \sigma $ on $ \mathbb Z $ is given by 
\begin{equation*}
\widehat \sigma  (\xi ) = \sum_{x\in \mathbb Z } \sigma (x) e (x \xi),   
\end{equation*}
where $e( \zeta )=e^{2\pi i \zeta }$ throughout. The inverse Fourier transform is denoted $\widecheck   \eta $. 
Occasionally, we may also denote the Fourier transform by $\mathcal{F}$, and inverse Fourier transform by $\mathcal{F}^{-1}$.

We further set $e_q( \zeta ) = e^{2\pi i \zeta /q}$. 
Recall that Ramanujan sums are defined by 
\begin{equation}\label{e:cq}
c _{q} (n) = \sum_{a \in \mathbb A _q} e_q( an/q) , 
\end{equation}
where $ \mathbb A _q = \{ 1\leq a < q \;:\; (a,q)=1\}$ is the multiplicative group associated to $ q$.  
Define by convention that $c_1(n)\equiv 1$.

%%%%%%?????????????????????%%%%% PROPOSITION PROPOSITION PROPOSITION Do we need this at all?
%\begin{proposition}\label{p:cq}  Let $ \mu (q)$ denote the M\"obius function.  We have 
%\begin{equation} \label{e:RamId}
%c _{q} (a) = \frac{\mu (q)} {\phi (q)}, \qquad a\in \mathbb A _q . 
%\end{equation}

%\end{proposition}
%%%%%%%%%%%%%%%%%%%%%%%%%%%%%% PROPOSITION PROPOSITION PROPOSITION

%\begin{proof} 
%For $ a $ relatively prime to $ q$, the map $ b\to ab$ is a permutation on $ \mathbb A _q$, so \eqref{e:RamId} follows. 
%It suffices, therefore, to prove the identity for $ a=1$. 
%The function $q\to c_q (1)$ is multiplicative. Thus, we need only check the formula \eqref{e:RamId} for $q$ a power of a prime. 
%For $q$ a prime, we have $ \mu (q)=1$, and it follows from 
%\begin{equation*}
%0 = \sum_{0\leq a < q} e_q (a) = 1 + \sum_{a\in  \mathbb A _q} e_q( an/q) . 
% \end{equation*}
%And, if $q=p^k$ for prime $p$ and integer $k\geq 2$, we have $ \mu (q)=0$ and 
%\begin{equation*}
% \sum_{\substack{0\leq a < p^k \\ p\,|\,a}} e_{p^k} (a)=0.  
% \end{equation*}
% So the equality follows in this case as well.

%\end{proof} 

A finer property of Ramanujan sums is recalled in Lemma~\ref{l:Ram} below.

%%%%%%%%%%%%%%%%%%%%%%%%%%%%%% SECTION  SECTION SECTION
%%%%%%%%%%%%%%%%%%%%%%%%%%%%%% SECTION  SECTION SECTION 
\section{Approximating Multipliers} %\label{s:}

We define the approximating multipliers.
Let $ \mathbf 1 _{[-1/8,1/8]} \leq \eta \leq \mathbf 1_{[-1/4,1/4] } $ be a Schwartz function. 
%with $ \mathcal F ^{-1} _ \mathbb R \eta = \widecheck  \eta $ non-negative.
For an integer $s$, let $\eta_s (\xi ) = \eta( 8^s \xi) $.  Define the Fourier transform of the usual averages by 
\begin{equation} \label{e:gamma}
 \widehat {\gamma_N} = \frac 1 N \sum_{n=1}^N \widehat {\delta_{n}} . 
\end{equation} 
The building blocks of the approximating multipliers are 
\begin{equation} \label{e:LqN}
\begin{split}
\widehat {L _{1,N}}  (\xi ) &= \widehat {\gamma_N} (\xi ) \eta _1 ( \xi )
\\
\widehat {L _{q,N}}  ( \xi) &= \frac { \mu(q)} { \phi (q)} \sum_{a\in \mathbb A_q}  \widehat {\gamma_N}  \cdot \eta_s  ( \xi -a/q)  , \qquad  2^s \leq q < 2 ^{s+1}, \,\, s\geq 1. 
\end{split}
\end{equation} 
Throughout, $q$ and $s$ have the relationship above, although this will be suppressed in the notation.
(This is a useful convention in the application of the multi-frequency maximal 
function inequality in the proof of the sparse bounds, see \eqref{e:multi}.) 
 
%%%%%%%%%%%%%%%%%%%%%%%%%%%%%% THEOREM THEOREM THEOREM
\begin{theorem}\label{t:approx}  Let $ A, N>10$ be  integers. 
If $K \lesssim (\log N)^A$, there holds 
\begin{equation}   \label{e:Approx} 
\widehat {\mathcal{A}_N}  = \sum_{1\leq q \leq K } \widehat{L _{q,N}}   +  r_{A,N,K}, 
\end{equation}
where $ \| r_{A,N,K} \|_{L^\infty} \lesssim_A  K^{-1 + 1/A} $.  

\end{theorem}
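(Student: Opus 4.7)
My plan is to follow the Hardy--Littlewood circle method, with the Siegel--Walfisz theorem producing the main term on major arcs and Vinogradov's exponential sum estimate (via Vaughan's identity) controlling the residual on minor arcs. Pointwise in $\xi \in [0,1)$, I would apply Dirichlet's approximation theorem with denominator $Q$ close to $N$ to write $\xi = a/q + \beta$ with $(a,q)=1$, $1 \leq q \leq Q$, and $|\beta| \leq 1/(qQ)$, and split the analysis according to whether $q \leq K$ (major arc) or $q > K$ (minor arc). Since the $\eta_s$ cutoff in \eqref{e:LqN} restricts each $\widehat{L_{q,N}}$ to the band $|\xi - a/q| \lesssim q^{-3}$, a suitable choice of $Q$ forces $\sum_{q \leq K} \widehat{L_{q,N}}(\xi) = 0$ whenever the Dirichlet denominator exceeds $K$, reducing the theorem to two independent comparisons.

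On the major arc side I split primes by residue classes modulo $q$ to expand
\[
\vartheta(N)\,\widehat{\mathcal{A}_N}(\xi) = \sum_{b \in \mathbb{A}_q} e(-ba/q) \sum_{\substack{p \leq N \\ p \equiv b \,(q)}} (\log p)\, e(-p\beta),
\]
up to the negligible contribution from primes dividing $q$. Plugging in the Siegel--Walfisz asymptotic $\sum_{p \leq t,\, p \equiv b \,(q)} \log p = \vartheta(t)/\phi(q) + O_A(t\exp(-c\sqrt{\log t}))$, which holds uniformly for $q \leq (\log N)^A$, and performing Abel summation against $e(-p\beta)$ isolates the main term. Summing in $b$ and invoking the Ramanujan identity $\sum_{b \in \mathbb{A}_q} e(-ba/q) = c_q(-a) = \mu(q)$ gives
\[
\widehat{\mathcal{A}_N}(\xi) = \frac{\mu(q)}{\phi(q)}\, \widehat{\gamma_N}(\beta) + O_A\bigl(\exp(-c'\sqrt{\log N})\bigr).
\]
Since $\widehat{\gamma_N}(\beta) = O(\min(1, (N|\beta|)^{-1}))$, I can then insert the cutoff $\eta_s(\beta)$ at negligible cost and identify the main term with $\widehat{L_{q,N}}(\xi)$.

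On the minor arcs ($q > K$), where $\sum_{q \leq K} \widehat{L_{q,N}}(\xi) = 0$, I would estimate $\widehat{\mathcal{A}_N}(\xi)$ directly via Vinogradov's bound
\[
\Bigl|\sum_{p \leq N} (\log p)\, e(p\xi)\Bigr| \lesssim (\log N)^C \bigl(N q^{-1/2} + N^{4/5} + N^{1/2} q^{1/2}\bigr).
\]
The principal obstacle, which I expect to be the most delicate step, is achieving the specific rate $K^{-1+1/A}$: the plain Vinogradov bound yields only $K^{-1/2}$ decay, which is insufficient once $A > 2$. The standard fix is to re-run Vaughan's identity with a finer dyadic decomposition and apply Siegel--Walfisz to the arising bilinear type-II sums, producing a refined minor-arc estimate of size $O_A((\log N)^{-A+1})$ uniformly in the range $K < q < N/K$ when $K \lesssim (\log N)^A$. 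This is precisely the target $O_A(K^{-1+1/A})$, closing the proof.
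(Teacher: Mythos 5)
There is a genuine gap, and it lies precisely in what you flag as ``the most delicate step.'' You try to obtain the bound $K^{-1+1/A}$ by proving an exponential-sum estimate of size $O_A\bigl((\log N)^{-A+1}\bigr)$ uniformly over the range $K < q$, but this is false. For $\xi$ near a rational $a/q$ with $K < q \leq (\log N)^{2A}$, the quantity $\widehat{\mathcal A_N}(\xi)$ is \emph{not} small in the minor-arc sense: the Siegel--Walfisz/Vaughan analysis (precisely the same one you invoke for $q \leq K$) shows
\[
\widehat{\mathcal A_N}(\xi) = \frac{\mu(q)}{\phi(q)}\,\widehat{\gamma_N}\bigl(\xi - a/q\bigr) + O\bigl(e^{-c\sqrt{\log N}}\bigr),
\]
so at $\xi = a/q$ with $q$ squarefree the multiplier has magnitude $\sim \phi(q)^{-1} \gtrsim q^{-1}$. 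If, say, $K \sim (\log N)^{1/2}$ and $q$ is just above $K$, this is of order $(\log N)^{-1/2}$, which is vastly larger than $(\log N)^{-A+1}$ once $A \geq 2$. No refinement of Vaughan's identity can beat these major-arc spikes; they are genuine features of the multiplier, not error terms. Your plan to treat everything with $q > K$ as a minor arc therefore cannot close.

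The paper's route around this is structurally different and worth internalizing. It first extends the approximating sum to a much larger cutoff, proving (with $B = 2A+8$, via the cited results of Vaughan \cite{MR628618}*{Lemma 3.1, Thm.~3.1})
\[
\widehat{\mathcal A_N} = \sum_{1\leq q \leq (\log N)^B} \widehat{L_{q,N}} + O\bigl((\log N)^{-A}\bigr),
\]
so that the genuine minor arcs (far from every $a/q$ with $q \leq (\log N)^B$) contribute $O\bigl((\log N)^{4-B/2}\bigr) = O\bigl((\log N)^{-A}\bigr)$, which is already $\lesssim K^{-1}$ since $K \lesssim (\log N)^A$. The exponent $K^{-1+1/A}$ then comes from a completely elementary source: truncating the long sum back to $q\leq K$. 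Since the multipliers $\widehat{L_{q,N}}$ with $2^s \leq q < 2^{s+1}$ have pairwise disjoint supports, the sup norm of the $s$-th block is $\lesssim \max_{2^s\leq q<2^{s+1}}\phi(q)^{-1} \lesssim 2^{-s(1-1/A)}$, and summing this over $2^s \gtrsim K$ yields $K^{-1+1/A}$. In short, the rate $K^{-1+1/A}$ is a totient-function estimate on the tail of the approximating sum, not an exponential-sum saving; no circle-method improvement is needed or possible there.
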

%%%%%%%%%%%%%%%%%%%%%%%%%%%%%% THEOREM THEOREM THEOREM

This is a consequence of  standard facts in the number theory literature, and is very similar to how these 
facts are used in \cite{MR995574}. We recall them here. 

%%%%%%%%%%%%%%%%%%%%%%%%%%%%%% LEMMB LEMMB LEMMB
\begin{lemma}\label{l:approx} For positive integers $ B $, there is an integer $ N_B$ so that for all  $ N> N_B$
%%  ENUMERBTE
\begin{enumerate}
\item   If $ \lvert  \xi \rvert < \frac{ (\log N) ^{B}} {N} $, then 
\begin{equation}\label{e:0Major}
 \widehat {\mathcal{A}_N} (\xi ) = \widehat {\gamma_N} (\xi ) + O (e ^{-c \sqrt {\log N}}).  
\end{equation}

\item If $ \lvert  \xi -a/q\rvert < \frac{ (\log N) ^{B}} {N} $ for $ (a,q)=1$ and $ 1 < q < (\log N) ^{B}$, then 
 \begin{equation}\label{e:Major}
 \widehat {\mathcal{A}_N} (\xi ) =  \frac{\mu (q)} {\phi (q)}\widehat {\gamma_N}(\xi-\frac{a}{q} ) + O (e ^{-c \sqrt {\log N}}).  
\end{equation}

\item If $ \xi $ does not meet any of the hypotheses of the prior two conditions, then 
\begin{equation}\label{e:minor}
 \widehat {\mathcal{A}_N} (\xi )   = O \left( ( \log N) ^{4-\frac{B}{2}} \right).
\end{equation}

\item  The following holds for $\lvert \xi\rvert\leq 1/2$
\begin{equation}\label{e:gamma_est}
\lvert  \widehat {\gamma_N} ( \xi  )\rvert  \lesssim  \min \{ 1,    (N \lvert  \xi \rvert ) ^{-1} \}.
\end{equation}

\end{enumerate}
%% ENUMERBTE
\end{lemma}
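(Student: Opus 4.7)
The plan is to apply the Hardy–Littlewood circle method. Parts (1) and (2) will be unified via a common decomposition into residue classes modulo $q$; parts (3) and (4) receive separate standard treatments. Part (4) is immediate: summing the geometric series for $\widehat{\gamma_N}$ gives $\widehat{\gamma_N}(\xi) = \frac{e(\xi)(1 - e(N\xi))}{N(1 - e(\xi))}$, which is bounded by $1$ trivially and by $C(N|\xi|)^{-1}$ on $|\xi|\leq 1/2$ since $|1 - e(\xi)| \gtrsim |\xi|$ there.

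For (1) and (2), I would split the defining sum of $\widehat{\mathcal{A}_N}(\xi)$ from \eqref{e:redefine} according to the residue of $p$ modulo $q$:
\begin{equation*}
\widehat{\mathcal{A}_N}(\xi) = \vartheta(N)^{-1}\sum_{r=1}^{q}e(ra/q)\sum_{\substack{p\leq N\\ p\equiv r\,(q)}}(\log p)\,e\bigl(p(\xi - a/q)\bigr).
\end{equation*}
Residues $r$ with $(r,q)>1$ contribute only through primes $p\mid q$ and are therefore negligible. For $(r,q)=1$, the Siegel–Walfisz theorem, uniformly valid for $q\leq (\log N)^B$, gives $\sum_{p\leq N,\,p\equiv r\,(q)}(\log p) = N/\phi(q) + O(Ne^{-c\sqrt{\log N}})$. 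Abel summation against the phase $e(n(\xi-a/q))$, whose increments are controlled by $|\xi-a/q|\leq (\log N)^B/N$, upgrades this into an asymptotic for the full exponential sum with error of the same shape, up to a harmless polylogarithmic factor. Summing over $(r,q)=1$ collapses the phase factors into the Ramanujan sum $c_q(a) = \mu(q)$ (since $(a,q)=1$). Dividing by $\vartheta(N)$ and absorbing $N/\vartheta(N)-1$ via \eqref{e:Pi_est} yields (2); specializing $a=0$, $q=1$ recovers (1).

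For (3), given $\xi$ outside all major arcs, Dirichlet's approximation theorem furnishes coprime $a,q$ with $q\leq Q := N(\log N)^{-B}$ and $|\xi - a/q| \leq 1/(qQ)$. Since $\xi$ fails the hypotheses of both (1) and (2), we must have $q\geq (\log N)^B$. Vinogradov's classical bound,
\begin{equation*}
\Bigl|\sum_{p\leq N}(\log p)\,e(p\xi)\Bigr| \lesssim \bigl(Nq^{-1/2} + N^{4/5} + (Nq)^{1/2}\bigr)(\log N)^4,
\end{equation*}
combined with the range $(\log N)^B \leq q \leq N(\log N)^{-B}$, bounds each of the three terms on the right by $N(\log N)^{4-B/2}$. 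Dividing by $\vartheta(N)\sim N$ delivers (3).

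The main obstacle is nothing deep: the argument reduces to invoking two classical number-theoretic inputs (Siegel–Walfisz and Vinogradov), as the authors signal by noting that the lemma follows Wierdl's use of these tools in \cite{MR995574}. The only delicate bookkeeping is to check that the polylogarithmic losses incurred by summing over $\phi(q) \leq (\log N)^B$ residue classes, together with factors of $N/\vartheta(N)$ and the discrete-to-continuous slippage in Abel summation, are all absorbed by the superpolynomial decay $e^{-c\sqrt{\log N}}$ coming from Siegel–Walfisz. This absorption is automatic because $(\log N)^{O(1)} \ll e^{c'\sqrt{\log N}}$, and it fixes the precise form of the claimed error terms.
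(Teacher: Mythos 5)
Your proof is correct, and it reconstructs the standard Hardy--Littlewood circle method argument (Siegel--Walfisz plus Abel summation on the major arcs, Vaughan/Vinogradov's exponential sum bound on the minor arcs) that the paper does not reprove but simply cites from Vaughan's book, Lemma~3.1 and Theorem~3.1. Since the paper outsources the entire proof to that reference and your sketch is exactly the argument given there, the approaches coincide.
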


The points (1), (2) and (3) above are in \cite{MR628618}*{Lemma 3.1 \& Thm. 3.1}, while the last point is  well known.

%%%%%%%%%%%%%%%%%%%%%%%%%%%%%% PROOF PROOF PROOF
\begin{proof}[Proof of Theorem~\ref{t:approx}]  
We note that by construction, the multipliers $ \{\widehat{L _{q,N}} \;:\; 2 ^{s} \leq q < 2 ^{s+1}\}$ are supported on disjoint intervals 
around the rationals $ a/q$, with $ a\in \mathbb A _q$, and $ 2 ^{s} \leq q < 2 ^{s+1}$.   From this, it follows from \eqref{e:phi} that 
\begin{equation} \label{e:s}
\Bigl\lVert  \sum_{2 ^{s} \leq q < 2 ^{s+1}} \widehat{L _{q,N}} \Bigr\rVert _{L^\infty } \leq \max _{2 ^{s} \leq q< 2 ^{s+1}} \phi (q) ^{-1} \lesssim
2 ^{-s (1-1/A)}.  
\end{equation}
Above, $ A$ is the integer in Theorem~\ref{t:approx}. 

It suffices to argue that for $ B = 2A+8$
\begin{equation}\label{e:usual}
\widehat {\mathcal{A}_N} = \sum_{1\leq q \leq (\log N) ^{B}} \widehat{L _{q,N}} + O ( \log N ) ^{-A},  
\end{equation}
because we can use \eqref{e:s} to complete the proof of \eqref{e:Approx}.  

We note that the intervals of $ \xi $ that appear in the conditions 1 and 2 of Lemma~\ref{l:approx} are pairwise disjoint. 
Let us assume that $ \xi $ meets the condition 2, so $  \lvert  \xi -a/q\rvert < \frac{ (\log N) ^{B}} {N} $ for $ (a,q)=1$ and $ 1 < q < (\log N) ^{B}$. 
To prove \eqref{e:Approx} in this case, we need to see that,
\begin{align*}
\eta_s(\xi-b/q)=
\begin{cases} 
0,\qquad \text{if } \mathbb A_q \ni b\neq a\\
1,\qquad \text{if } b=a
\end{cases}
\end{align*}
Hence
\begin{align*}
\widehat{L _{q,N}} (\xi )=\frac{\mu (q)} {\phi (q)} \widehat {\gamma_N}(\xi -a/q),
\end{align*}
and furthermore by \eqref{e:Major},
\begin{equation}\label{e:A-Lq}
\lvert \widehat{\mathcal{A}_N}(\xi)-\widehat{L_{q,N}}(\xi) \rvert\leq e^{-c\sqrt{\log N}}.
\end{equation}
%The first line follows from the construction of $ L _{q,N}$. Only one of the terms contributes. The remaining steps are elementary and 
%follow from the definition.   
%On the other hand, 
We also need to see that all the other $ L _{q',N} (\xi )$ are small. Indeed, for $ 1< q'\neq q \leq (\log N) ^{B}$, and $ a'\in \mathbb A _{q'}$, 
we have $ \lvert  \xi -a'/q'\rvert \geq \frac{ (\log N) ^{B}} {N}$. 
Hence, by \eqref{e:gamma_est}, we have 
\begin{align*}
\lvert  \widehat{L _{q',N}} (\xi )\rvert \lesssim  \phi (q') ^{-1}  (\log N) ^{-B}. 
\end{align*}
Similarly, we have $|\xi|\geq \frac{ (\log N) ^{B}} {N}$, hence
\begin{align*}
\lvert  \widehat{L _{1,N}} (\xi )\rvert \lesssim (\log N) ^{-B}. 
\end{align*}
Summing the estimates for $\widehat{L_{q',N}}$ over $1 \leq q'\neq q \leq (\log N) ^{B}$ and using \eqref{e:phi}, we have
\begin{equation}\label{e:MIN}
\sum_{1\leq q'\neq q\leq (\log N)^B} \lvert \widehat{L_{q', N}}(\xi) \rvert \lesssim (\log N)^{-A}.
\end{equation}
Putting \eqref{e:A-Lq} and \eqref{e:MIN} together, we have verified \eqref{e:usual} in this case.  
If $\xi$ meets condition 1 of Lemma~\ref{l:approx}, the proof is completely analogous.

We now assume that $ \xi $ does not meet the first or second condition of Lemma~\ref{l:approx}.  Then, \eqref{e:minor} holds. 
And, similar to \eqref{e:MIN}, we have
\begin{equation*}
\sum_{1\leq q \leq (\log N) ^{B}} \lvert  \widehat{L _{q,N}} (\xi )\rvert 
\lesssim \sum_{1\leq q \leq (\log N) ^{B}} \phi (q) ^{-1}  (\log N) ^{-B} \lesssim (\log N) ^{-B+1}.  
\end{equation*}
Combining \eqref{e:minor} with \eqref{e:MIN}, we have completed the proof of \eqref{e:usual}.  
\end{proof}
%%%%%%%%%%%%%%%%%%%%%%%%%%%%%% PROOF PROOF PROOF

The building blocks of the approximating multipliers have explicit inverse Fourier transforms. 

%%%%%%%%%%%%%%%%%%%%%%%%%%%%%% LEMMA LEMMA LEMMA
\begin{lemma}\label{l:invert} With the notation of \eqref{e:LqN}, there holds 
\begin{equation} \label{e:invert}
L _{q,N} (x)   =  \frac{\mu (q)  } {\phi (q)} c_q (-x)  \cdot   { \gamma _N} \ast  \widecheck { \eta _{s}} (x)
\end{equation}
\end{lemma}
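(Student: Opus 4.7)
The identity is a direct Fourier-inversion computation, and the entire plan is to execute that computation carefully with the correct sign conventions for the shift--modulation duality. I would first interpret the right-hand side of \eqref{e:LqN} for $q \geq 2$ as a sum of translates of a single prototype multiplier: setting $m(\xi) := \widehat{\gamma_N}(\xi)\,\eta_s(\xi)$, we may write
$$\widehat{L_{q,N}}(\xi) = \frac{\mu(q)}{\phi(q)} \sum_{a \in \mathbb{A}_q} m\!\left(\xi - \tfrac{a}{q}\right).$$
Since $m$ is a pointwise product, the convolution theorem gives $\widecheck{m} = \gamma_N \ast \widecheck{\eta_s}$, which produces the scale-free, non-arithmetic factor appearing in \eqref{e:invert}.

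Next, I would invoke the standard translation--modulation duality on $\mathbb{Z}$: for any frequency $\alpha$, the inverse Fourier transform of $\xi \mapsto m(\xi - \alpha)$ is $x \mapsto e(-\alpha x)\,\widecheck{m}(x)$, as seen by the substitution $\eta = \xi - \alpha$ in $\int_0^1 m(\xi - \alpha)\,e(-x\xi)\,d\xi$. Applying this term by term with $\alpha = a/q$ and summing over $a \in \mathbb{A}_q$ collects the modulations into a Ramanujan sum,
$$\sum_{a \in \mathbb{A}_q} e\!\left(-\tfrac{ax}{q}\right) = c_q(-x),$$
by the definition \eqref{e:cq}. Multiplying by the prefactor $\mu(q)/\phi(q)$ yields \eqref{e:invert}. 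The case $q = 1$ is immediate from $\widehat{L_{1,N}} = \widehat{\gamma_N}\,\eta_1$ together with the convention $c_1 \equiv 1$.

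There is no substantive obstacle beyond careful bookkeeping: one must remember that translating in frequency by $+a/q$ modulates in position by $e(-ax/q)$, so that the Ramanujan sum appearing in \eqref{e:invert} carries $-x$ (rather than $+x$) as its argument. The only other small point worth pausing on is the implicit parsing of \eqref{e:LqN}, namely that it is the product $\widehat{\gamma_N}\cdot \eta_s$ as a whole which is translated by $a/q$; once that reading is fixed, everything is forced.
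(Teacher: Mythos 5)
Your proof is correct and takes essentially the same route as the paper: Fourier inversion of the defining formula \eqref{e:LqN}, the convolution theorem to produce the factor $\gamma_N\ast\widecheck{\eta_s}$, and the translation--modulation duality to collect the sum over $a\in\mathbb{A}_q$ into the Ramanujan sum $c_q(-x)$, with the $q=1$ case handled by the convention $c_1\equiv 1$. You also get the sign bookkeeping right (frequency shift by $+a/q$ gives $e(-ax/q)$, hence $c_q(-x)$ rather than $c_q(x)$), which is exactly the point the paper's computation is careful about; the only small omission is that $\widecheck{\eta_s}$ should strictly be read as the inverse transform of the $1$-periodized version of $\eta_s$, a detail the paper notes explicitly but which does not affect the substance of the argument.
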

%%%%%%%%%%%%%%%%%%%%%%%%%%%%%% LEMMA LEMMA LEMMA

%%%%%%%%%%%%%%%%%%%%%%%%%%%%%% PROOF PROOF PROOF
\begin{proof}
For $q\geq 2$, compute 
\begin{align*}
L _{q,N} (x) & = \int _{\mathbb T } \widehat {L _{q,N}} (\xi ) e (-x \xi ) \; d \xi 
\\
& =  \frac{\mu (q)  } {\phi (q)} \sum_{a\in \mathbb A _q}  \int _{\mathbb T } \widehat { \gamma_N } \cdot \eta_s  ( \xi -a/q) e (-x \xi ) \; d \xi 
\\
& =  \frac{\mu (q)  } {\phi (q)} \gamma _N  \ast \widecheck {\eta _{s}} (x)\sum_{a\in \mathbb A _q} e _q (-ax) 
 =  \frac{\mu (q)  } {\phi (q)}  c_q (-x) \cdot \gamma _N  \ast  \widecheck {\eta _{s}} (x),
\end{align*}
where we are using the notation of Ramanujan sums \eqref{e:cq}.  
Above $\widecheck{\eta_s}$ is understood as $\widecheck{\eta_{s,\mathrm{per}}}$, where $\eta_{s,\mathrm{per}}$ is the $1$-periodic extension of $\eta_s$.
For $q=1$, \eqref{e:invert} holds since $c_1(x)\equiv 1$.
\end{proof}
%%%%%%%%%%%%%%%%%%%%%%%%%%%%%% PROOF PROOF PROOF

The term on the right in \eqref{e:invert} includes an average $\gamma_N$. It also includes a Ramanujan sum term. 
One should note that $ c _{q} (0) = \phi (q)$, but this is far from typical behavior. 
This crude estimate  shows that for most $ x$, $ c_q (x) $ is about one.   

%%%%%%%%%%%%%%%%%%%%%%%%%%%%%% LEMMA LEMMA LEMMA
\begin{lemma}\label{l:Ram}  
For any $ \epsilon >0$, and integer $ k > 1$, uniformly in $ M > Q ^{k}$, there holds 
\begin{equation} \label{e:Ram<}
  \Bigl[
   \frac{1}M  \sum_{\lvert  x\rvert < M } \Bigl\lvert \sum_{q=1}^Q  \frac {   c_q (x) }{ \phi (q)  } \Bigr\rvert ^{k}
  \Bigr] ^{1/k}     \lesssim Q^{\epsilon }. 
\end{equation}
The implied constant depends upon $k$ and $\varepsilon$.  
\end{lemma}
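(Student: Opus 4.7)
My plan is to derive a pointwise bound on $S(x) := \sum_{q=1}^Q c_q(x)/\phi(q)$ via the Ramanujan multiplicative identity, then estimate its $k$-th moment by a divisor-count argument. First, using the classical identity $c_q(n) = \mu(q/(q,n)) \phi(q)/\phi(q/(q,n))$, one obtains $c_q(x)/\phi(q) = \mu(m)/\phi(m)$ with $m = q/(q,x)$. Reindexing by $d = (q,x)$ and writing $q = dm$---observing that $(dm,x) = d$ precisely when $d \mid x$ and $(m, x/d) = 1$---this gives
\begin{equation*}
S(x) = \sum_{\substack{d \mid x \\ d \leq Q}} \;\; \sum_{\substack{m \leq Q/d \\ (m, x/d) = 1}} \frac{\mu(m)}{\phi(m)}.
\end{equation*}
The classical estimate $\sum_{m \leq Y} 1/\phi(m) \lesssim \log Y$ (combined with $|\mu(m)/\phi(m)| \leq 1/\phi(m)$) bounds the inner sum by $O(\log Q)$, yielding the pointwise bound $|S(x)| \lesssim (\log Q)\, \tau_Q(x)$, where $\tau_Q(x) := \#\{d \mid x : d \leq Q\}$; this holds uniformly for $x \in \mathbb Z$ (noting $\tau_Q(0) = Q$ and $S(0) = Q$).

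It then remains to bound the $k$-th moment of $\tau_Q$ over $|x| < M$. Expanding and exchanging orders of summation,
\begin{equation*}
\sum_{|x|<M} \tau_Q(x)^k  = \sum_{\vec d \in [1,Q]^k}  \#\{|x|<M : \operatorname{lcm}(\vec d) \mid x\}   \leq 2M \sum_{\vec d \in [1,Q]^k} \frac{1}{\operatorname{lcm}(\vec d)} + Q^k.
\end{equation*}
Since $M > Q^k$, dividing by $M$ reduces everything to bounding $\sum_{\vec d \in [1,Q]^k} 1/\operatorname{lcm}(\vec d)$. Using the factorization $1/\operatorname{lcm}(\vec d) = \prod_p p^{-\max_j v_p(d_j)}$ together with the prime-by-prime constraint $p^{v_p(d_j)} \leq Q$ coming from $d_j \leq Q$, this sum is majorized by the truncated Euler product
\begin{equation*}
\prod_{p \leq Q} \left( 1 + \sum_{E \geq 1} \frac{(E+1)^k - E^k}{p^E} \right) \leq \prod_{p \leq Q}\left( 1 + \frac{2^k - 1}{p} + O_k(1/p^2) \right) \lesssim (\log Q)^{2^k-1}
\end{equation*}
by Mertens's theorem. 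Combining everything,
\begin{equation*}
\Bigl( \frac{1}{M}\sum_{|x|<M} |S(x)|^k \Bigr)^{1/k} \lesssim (\log Q)^{1 + (2^k -1)/k} \lesssim Q^\varepsilon,
\end{equation*}
as desired, with implied constants depending on $k$ and $\varepsilon$.

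I expect the main obstacle to be the rigorous justification of the Euler-product bound on $\sum_{\vec d \in [1,Q]^k} 1/\operatorname{lcm}(\vec d)$: the constraint $d_j \leq Q$ does not factor across primes, so one must argue that relaxing to the weaker prime-by-prime bound $p^{v_p(d_j)} \leq Q$ (which decouples the sum) does not lose the overall polylogarithmic control. The precise exponent matters less than the polylog growth itself, since any polylog of $Q$ is dominated by $Q^\varepsilon$ for $Q$ large.
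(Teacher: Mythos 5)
Your proof is correct, and it is a genuinely different route from the paper's. The paper only sketches the argument, deferring the key moment estimate
\[
\Bigl[\tfrac{1}{M}\sum_{|x|<M}\bigl|\sum_{P\leq q<2P}|c_q(x)|\bigr|^k\Bigr]^{1/k}\lesssim P^{1+\epsilon/2}
\]
to Bourgain (Geom.\ Funct.\ Anal.\ 1993, (3.43)) or to Kesler--Lacey--Mena, then divides by $\phi(q)\gtrsim q^{1-\epsilon/4}$, decomposes $[1,Q]$ dyadically, and applies H\"older to sum the dyadic pieces. You instead give a self-contained elementary argument: the H\"older/von Sterneck identity $c_q(x)=\mu(q/(q,x))\,\phi(q)/\phi(q/(q,x))$ converts $S(x)$ into a divisor sum, yielding the pointwise bound $|S(x)|\lesssim(\log Q)\,\tau_Q(x)$, and the $k$-th moment of $\tau_Q$ is then controlled by the lcm sum and a truncated Euler product. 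What your approach buys is transparency and independence from the cited literature; what the paper's approach buys is brevity and reuse of a moment estimate already packaged elsewhere.

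On the point you flag as the ``main obstacle'': there is no gap there. Since $d_j\le Q$ implies $p^{v_p(d_j)}\le d_j\le Q$ for every prime $p$, the set $[1,Q]^k$ is contained in the decoupled set $\{\vec d:\ p^{v_p(d_j)}\le Q\ \forall p,j\}$, and all terms $1/\operatorname{lcm}(\vec d)$ are nonnegative. Enlarging the index set therefore only increases the sum, and on the enlarged set the summand $\prod_p p^{-\max_j v_p(d_j)}$ factors exactly across primes, giving the Euler product as a rigorous upper bound with no loss to justify. Two very minor cosmetic points: for $Q=1$ the factor $\log Q$ vanishes, so one should write $1+\log Q$ (or simply assume $Q\ge 2$, which is the only case of interest); and it is worth stating explicitly that the $x=0$ term contributes $Q^k/M<1$ after dividing by $M>Q^k$, so it is absorbed into the implied constant. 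Neither affects the validity of the argument.
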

%%%%%%%%%%%%%%%%%%%%%%%%%%%%%% LEMMA LEMMA LEMMA

%%%%%%%%%%%%%%%%%%%%%%%%%%%%%% PROOF PROOF PROOF
\begin{proof}[Sketch of Proof]
We will not give a complete proof.  It follows from work of Bourgain \cite{MR1209299}*{(3.43), page 126} that we have, under the assumptions above, that 
for any integer $P$, 
\begin{equation*}
   \Biggl[ \frac{1}M  \sum_{\lvert  x\rvert < M }\Bigl\lvert  \sum_{ P \leq q < 2P}    \lvert   c_q (x)\rvert  \Bigr\rvert ^{k} \Biggr] ^{1/k}\lesssim P^{1+ \epsilon/2 }, \qquad M  > P ^{k}.  
\end{equation*}
This is given a stand-alone proof in \cite{181012344K}*{Lemma 3.13}.  
Using the well known lower bound $ \phi (q) \gtrsim q ^{1- \epsilon /4}$, we see that
\begin{equation}\label{e:rama_1}
\frac{1}M  \sum_{\lvert  x\rvert < M }\Bigl\lvert  \sum_{ P \leq q < 2P}    \frac{\lvert   c_q (x)\rvert}{\phi(q)}  \Bigr\rvert ^{k}\lesssim P^{3k\varepsilon/4}.
\end{equation}
Finally, let integer $m_0$ be such that $2^{m_0}\leq Q<2^{m_0+1}$. We have
\begin{align*}
\frac{1}{M}  \sum_{\lvert  x\rvert < M }\Bigl\lvert  \sum_{q=1}^Q    \frac{\lvert   c_q (x)\rvert}{\phi(q)}  \Bigr\rvert ^{k}
\leq &\frac{1}{M} \sum_{\lvert  x\rvert < M }\Bigl\lvert  \sum_{m=0}^{m_0}\, \sum_{2^m\leq q< 2^{m+1}}\frac{\lvert   c_q (x)\rvert}{\phi(q)}  \Bigr\rvert ^{k}\\
\leq &(m_0+1)^{k-1}\, \sum_{m=0}^{m_0}\, \frac{1}{M}\sum_{\lvert x\rvert<M} \Bigl\lvert \sum_{2^m\leq q< 2^{m+1}}\frac{\lvert   c_q (x)\rvert}{\phi(q)}  \Bigr\rvert ^{k}\\
\lesssim &(\log Q)^{k-1} \sum_{m=0}^{m_0} 2^{3mk\varepsilon/4}\\
\lesssim & Q^{k\varepsilon},
\end{align*}
where we used \eqref{e:rama_1}. This proves the claimed result. 
\end{proof}
%%%%%%%%%%%%%%%%%%%%%%%%%%%%%% PROOF PROOF PROOF

%%%%%%%%%%%%%%%%%%%%%%%%%%%%%% SECTION  SECTION SECTION
%%%%%%%%%%%%%%%%%%%%%%%%%%%%%% SECTION  SECTION SECTION 
\section{Fixed Scale} %\label{s:}

The fixed scale result has fewer complications than the sparse bound.  We show that for any $ 1< p < 2$, there holds 
\begin{equation}\label{e:FS}
N ^{-1} ( \mathcal{A}_N f, g ) \leq C_p  \langle f \rangle _{2E,p} \langle g \rangle _{E,p}, 
\end{equation}
where $E$ is an interval of length $N$, and the inequality is independent of $ N$.  
Since the condition is open with respect to $ p$, it suffices to consider the case of $p' \in \mathbb{N}$, with $f = \mathbf 1_{F}$ supported on $2E$ and 
$g= \mathbf 1_{G}$ supported on $E$. 
We trivially have 
\begin{equation*}
N ^{-1} ( \mathcal{A}_N f, g ) \lesssim \log N \cdot \langle f \rangle_{2E,1} \langle g \rangle_{E,1} 
\end{equation*}
so that we conclude \eqref{e:FS} if   
\begin{equation}\label{e:small}
 (\log N)( \langle f \rangle_{2E,1} \langle g \rangle_{E,1}) ^{1/p'} \leq 1. 
\end{equation}
We assume that this fails, thus 
\begin{equation}\label{e:fixed_assu}
\min\{ \langle f \rangle_{2E,1}, \langle g\rangle_{E,1}\} > (\log N) ^{-p'}.
\end{equation}

Now, we prove this auxiliary estimate--the High Low estimate.
For constants $ 1\leq J \leq (\log N) ^{p'}$, we can write $ \mathcal{A}_N f = H +L$ where 
\begin{align}\label{e:FSH}
\langle H  \rangle _{E,2}& \lesssim J ^{-1+ \frac{1}{p'}} \langle f \rangle_{2E,1}^{1/2} 
\\ \label{e:FSL}
\langle L \rangle _{E, \infty } & \lesssim J ^{1/p'} \langle f \rangle_{2E,1}^{1/p}. 
\end{align}
The implied constants depend upon $ p$.  The term $ H$ is the High term, and it satisfies a quantified $ \ell ^{2}$ estimate, while 
   $L$ satisfies something close to the $ \ell ^{1} \to \ell ^{\infty }$ endpoint. It consists of the `low frequency' terms.

From this, it follows that 
\begin{equation*}
N ^{-1} (\mathcal{A}_N f, g ) \lesssim J ^{-1+ \frac{1}{p'}}  ( \langle f \rangle_{2E,1} \langle g \rangle_{E,1}  ) ^{1/2} 
+ J ^{1/p'}   \langle f \rangle_{2E,1} ^{1/p} \langle g \rangle_{E,1} . 
\end{equation*}
The two sides are equal provided that 
\begin{equation} \label{e:equal}
J  \simeq   \langle  f\rangle_{2E,1} ^{1/2 - 1/p}  \langle g \rangle_{E,1} ^{-1/2}.
\end{equation}
By our lower bound on $\langle f \rangle_{2E,1}$ and $\langle g\rangle_{E,1}$ from \eqref{e:fixed_assu}, this is an allowed choice of $ J$.
And, then \eqref{e:FS} follows. 

\smallskip 
It remains to prove \eqref{e:FSH} and \eqref{e:FSL}.  Apply our decomposition of the averaging operator  \eqref{e:Approx} with $ A=p'$ and $K=J$.  With the notation from \eqref{e:Approx}, 
set $ H = \mathcal F ^{-1} (r _{N,A,J} \widehat f ) $. 
The $ \ell ^2 $ estimate \eqref{e:FSH} on $ H$ follows from the $ L ^{\infty }$ bound on $ r _{N,A,J}$.  Turning to \eqref{e:FSL}, the estimate for $ L$, 
from \eqref{e:invert}, we have 
\begin{align*}
\Bigl\lvert \sum_{q=1}^J L _{q,N}\ast f  (x) \Bigr\rvert
& \leq 
\sum_{q=1}^J \left( \frac{\lvert  c_q (\cdot )\rvert } {\phi (q)}  \lvert { \gamma _N} \ast \widecheck { \eta _s} (\cdot) \lvert \right) \ast f (x) 
\\
& \leq 
  \sum_{y=1}^{N} \sum_{q=1}^J \frac{\lvert c_{q}(y) \rvert }{ \phi(q) } \lvert \gamma_{N} \ast \widecheck{\eta_{s} }(y) \lvert \,  f(x-y).
\end{align*}  
Here note that
\begin{equation}\label{e:measure}
|\gamma_N\ast \widecheck{\eta_s}(y)|\leq \frac{1}{N} \|\widecheck{\eta_s}\|_{\ell^1}\lesssim \frac{1}{N}.
\end{equation}
Hence
\begin{align*}
\Bigl\lvert \sum_{q=1}^J L _{q,N}\ast f  (x) \Bigr\rvert
& \leq 
  \frac{1}{N}\sum_{y=1}^{2N} \sum_{q=1}^J \frac{ \lvert c_{q}(y)\rvert }{ \phi(q) }  f(x-y)
\\
& \lesssim 
  \Bigl[
 \frac{1}{2N} \sum_{y=1} ^{2N}\Bigl\lvert \sum_{q=1}^J \frac{\lvert  c_q ( y )\rvert  } {\phi (q)  } \Bigr\rvert ^{p'}
\Bigr] ^{1/p'}     
 \Bigl[
 \frac{1}{2N} \sum_{y=1} ^{2N} f (x -y)
\Bigr] ^{1/p}     
\\
& \lesssim J ^{1/p'} \langle f \rangle_{2E,1} ^{1/p}.  
\end{align*}
%\\& \lesssim J ^{1/p'} (  { \gamma _N} \ast  \widecheck { \eta _s}\ast f (x) ) ^{1/p} 
Above, we have appealed to H\"older inequality and \eqref{e:Ram<}, with appropriate choice of parameters.  Note this \eqref{e:Ram<} only applies for $ N> N _{p}$, 
for a choice of $ N_p$ that is only depending on $ p$.  
After that, we simplify the  expression, since  $ f$ is an indicator set.   This completes the proof.

%%%%%%%%%%%%%%%%%%%%%%%%%%%%%% SECTION  SECTION SECTION
%%%%%%%%%%%%%%%%%%%%%%%%%%%%%% SECTION  SECTION SECTION 
\section{Sparse Bound} %\label{s:}

We prove the sparse bound in Theorem~\ref{t:sparse}.  The sparse bound is stronger for smaller choices of $ (r,s)$, and so it suffices 
to prove the $(p,p)$ sparse bound for all $ 1< p < 2$.    Again, by openness of the condition we are proving, it suffices to restrict attention 
to functions $ f, g$ that are indicator sets.

The sparse bound is proved by recursion, which depends upon the following definition. 
Let $E$ be an interval of length $2^{n_0}$. 
Let $f=\mathbf 1_F$ be supported on $2E$, and $g=\mathbf 1_G$ be supported on $E$. 
Let $ \tau \;:\; E \to \{2 ^{n} \;:\; 1\leq n \leq n_0\}$ be a choice of stopping time.  
We say that $ \tau $ is \emph{admissible} if for any interval $ I\subset E$ such that $ \langle f \rangle_{3I,1} > 100 \langle f  \rangle_{2E,1}$, there holds 
\begin{equation}\label{e:admiss}
\inf _{x\in I} \tau (x) > \lvert  I\rvert.   
\end{equation}
We will have direct recourse to this at the end of the proof of the Lemma below. 

%%%%%%%%%%%%%%%%%%%%%%%%%%%%%% LEMMA LEMMA LEMMA
\begin{lemma}\label{l:admiss}  For all admissible stopping times, and $ 1< p < 2$, there holds 
\begin{equation}\label{e:admiss<}
(\mathcal{A}_{\tau } f ,g) \lesssim (\langle f \rangle_{2E,1} \langle g \rangle_{E,1}) ^{1/p} \lvert  E\rvert.  
\end{equation}
\end{lemma}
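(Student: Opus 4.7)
The plan is to port the High--Low decomposition used in the proof of Theorem~\ref{t:fixed} to the present setting, with the stopping time $\tau$ replacing the fixed scale $N$ pointwise. The admissibility condition is precisely what is needed to transfer the low-frequency pointwise bound. As in the fixed-scale proof, I may first assume the lower bound
\begin{equation*}
\min\{\langle f\rangle_{2E,1},\langle g\rangle_{E,1}\} > (\log|E|)^{-p'},
\end{equation*}
since otherwise the trivial estimate $(\mathcal{A}_\tau f,g)\lesssim (\log|E|)\langle f\rangle_{2E,1}\langle g\rangle_{E,1}|E|$ already gives \eqref{e:admiss<}.

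Applying Theorem~\ref{t:approx} with $A=p'$ at each scale $N=\tau(x)$ for a common cutoff $J\in [1,(\log|E|)^{p'}]$ (to be chosen), I split
\begin{equation*}
\mathcal{A}_\tau f(x) = L_\tau f(x) + H_\tau f(x),\qquad L_\tau f(x) := \sum_{q=1}^J L_{q,\tau(x)}\ast f(x),
\end{equation*}
with $H_\tau f(x)$ arising from the residual multiplier $r_{A,\tau(x),J}$. The high-frequency term is controlled in $\ell^2$ by Cauchy--Schwarz. Since $\|r_{A,N,J}\|_\infty\lesssim J^{-1+1/p'}$ uniformly in $N$, the multi-frequency maximal function framework alluded to in the comment after \eqref{e:LqN} yields $\|\sup_N|H_N f|\|_{\ell^2}\lesssim J^{-1+1/p'}\|f\|_{\ell^2}$, and therefore
\begin{equation*}
(H_\tau f,g)\lesssim J^{-1+1/p'}\,\langle f\rangle_{2E,1}^{1/2}\,\langle g\rangle_{E,1}^{1/2}\,|E|.
\end{equation*}

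For the low-frequency term, I reprise the pointwise computation at the end of Section~4 with $N$ replaced by $\tau(x)$. Using Lemma~\ref{l:invert}, the estimate \eqref{e:measure}, H\"older's inequality in $y$ with exponents $(p',p)$, and Lemma~\ref{l:Ram},
\begin{equation*}
|L_\tau f(x)|\lesssim J^{1/p'}\,\Bigl[\frac{1}{2\tau(x)}\sum_{y=1}^{2\tau(x)} f(x-y)\Bigr]^{1/p}.
\end{equation*}
To pass from the local average at scale $\tau(x)$ to the global average $\langle f\rangle_{2E,1}$, I pick an interval $I_x\ni x$ of length $2\tau(x)$ arranged so that $[x-2\tau(x),x]\subset 3I_x$. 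Since $\tau(x)<|I_x|$, the admissibility condition \eqref{e:admiss} taken contrapositively forces $\langle f\rangle_{3I_x,1}\lesssim \langle f\rangle_{2E,1}$. Hence $\|L_\tau f\|_\infty\lesssim J^{1/p'}\langle f\rangle_{2E,1}^{1/p}$, giving
\begin{equation*}
(L_\tau f,g)\lesssim J^{1/p'}\,\langle f\rangle_{2E,1}^{1/p}\,\langle g\rangle_{E,1}\,|E|.
\end{equation*}

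Summing the two bounds and optimizing $J$ exactly as in \eqref{e:equal} (with $N$ replaced by $|E|$) produces \eqref{e:admiss<}; the lower bound from the normalization step guarantees that the optimal $J$ lies in the admissible range. The main obstacle is the high-frequency step: obtaining the same polynomial decay in $J$ for the maximal operator $\sup_N|H_N f|$ that a direct Plancherel argument gives at a single scale, despite $\tau$ varying pointwise over up to $\log|E|$ dyadic values. This is exactly what the multi-frequency maximal function inequality is designed to furnish, and it is the principal new ingredient required beyond what sufficed in the fixed-scale proof.
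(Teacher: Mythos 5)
Your proposal captures the correct overall shape (High--Low split, optimize over $J$, use admissibility to localize the low term, use the Ramanujan lemma for the $J^{1/p'}$ gain) but it has a genuine gap in the treatment of the high-frequency term, and a smaller bookkeeping issue with applying Theorem~\ref{t:approx} pointwise.

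The central problem is your claim that the ``multi-frequency maximal function framework'' yields
$\bigl\lVert \sup_N \lvert H_N f\rvert \bigr\rVert_{\ell^2}\lesssim J^{-1+1/p'}\lVert f\rVert_{\ell^2}$
for $H_N = \mathcal F^{-1}\bigl(r_{A,N,J}\widehat f\,\bigr)$. The multi-frequency maximal inequality applies to the structured pieces $\sum_{q\sim 2^k} L_{q,N}$, which are frequency-localized at finitely many rationals with a common $\gamma_N\cdot\eta_s$ profile; it says nothing about the unstructured residual $r_{A,N,J}$, which is supported on essentially the whole circle (it contains the full minor-arc contribution). An $L^\infty$ bound on each $r_{A,N,J}$ individually does not control the maximal operator; a naive square function attempt with the fixed cutoff $K=J$ gives $\sum_{n}\lVert \rho_{2^n}\ast f\rVert_2^2 \lesssim \lVert f\rVert_2^2 \sum_n J^{-2+2/p'}$, which diverges. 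The paper's actual construction avoids this by choosing a \emph{variable}, $\tau$-dependent major-arc cutoff $K = 2^m$ with $2^m\approx n^{p'+1}$ at scale $N=2^n$, so that $\lVert\rho_{2^n}\rVert_\infty\lesssim n^{-p'}$ decays in $n$ and the square function sum $\sum_{n\gtrsim J^{1/p'}} n^{-2p'}\lesssim J^{-2+1/p'}$ converges; then the intermediate range $J<q\leq 2^m$ of major-arc pieces is controlled dyadically in $q$ via Bourgain's multi-frequency maximal inequality, which is exactly where that inequality is designed to apply. Your decomposition cuts at $K=J$ and therefore has no such intermediate range, and your residual is too large.

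Secondly, Theorem~\ref{t:approx} requires $K\lesssim(\log N)^A$, i.e.\ at scale $N=\tau(x)$ you need $J\lesssim(\log\tau(x))^{p'}$ \emph{pointwise} in $x$. Your normalization controls $|E|$, not $\tau(x)$, and $\tau(x)$ can be far smaller than $|E|$. The paper addresses this by absorbing the set $B=\{x:\log\tau(x)\leq D_p J^{1/p'}\}$ into the Low estimate via the trivial bound \eqref{e:Bad}, rather than by normalizing the densities. Your argument silently assumes the decomposition is valid at every $x$, which it is not. Relatedly, the condition $M>Q^k$ in Lemma~\ref{l:Ram} and the tail estimate $8^s\lesssim J^3 <\tau(x)$ (Lemma~\ref{l:measure}) both require exactly this lower bound on $\tau(x)$, and the bound $\lvert L_\tau f(x)\rvert$ picks up contributions from $\lvert y\rvert\gg\tau(x)$ through the Schwartz tails of $\widecheck{\eta_s}$, which your restriction to $y\in[1,2\tau(x)]$ omits.

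The low-term strategy (H\"older, Ramanujan lemma, admissibility to replace the local average by $\langle f\rangle_{2E,1}$) and the optimization over $J$ are essentially the paper's, and correct once the preliminary reduction to the complement of $B$ is in place and the convolution tails are accounted for.
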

%%%%%%%%%%%%%%%%%%%%%%%%%%%%%% LEMMA LEMMA LEMMA

It is a routine argument to see that this implies the sparse bound as written in Theorem~\ref{t:sparse}, see \cite{2019arXiv190705734H}*{Lemma 2.8} or  \cite{181002240}*{Lemma 2.1}.  
We prove the Lemma with the auxiliary High Low construction.  For integers $ J = 2 ^{j}$,  we write $ \mathcal{A}_{\tau } f \leq H +L $ where 
\begin{align}\label{e:SpH}
\langle H  \rangle _{E,2} &\lesssim J ^{-1+1/p'}  \langle f \rangle_{2E,1} ^{1/2}, 
\\
\label{e:SpL} 
\langle L  \rangle _{E, \infty  }&  \lesssim J ^{1/p'} \langle f \rangle_{2E,1} ^{1/p}. 
\end{align}
The conclusion of \eqref{e:admiss<} is very similar to the earlier argument in \eqref{e:equal}, and we omit the details.

\smallskip 

We proceed with the construction of the High and Low terms.  We begin with the trivial bound, following from admissibility, 
\begin{equation}  \label{e:Bad}
\langle  \mathcal{A}_{ \tau } f \rangle_{E,\infty} \lesssim \sup _{x} (\log \tau (x)) \langle f \rangle_{2E,1}.  
\end{equation}
On the set $ B = \{ \log \tau (x) \leq  D_p J ^{1/p'}\}$, we see that \eqref{e:SpL} holds.  Here, $D_p$ is a constant that depends only on $p$, which we specify in the discussion of the Low term below.  
We proceed under the assumption that  the set $ B$ is empty.  
Hence the following holds on $E$:
\begin{equation}\label{e:sparse_assu}
\tau(x)\geq D_p J^{1/p'}.
\end{equation}

We are then concerned with averages $ A _{2 ^{n}} f $, where $ n\geq D_p J ^{1/p'}$.  
Let 
\begin{align*}
m:=(p'+1)\lfloor \log_2 n\rfloor.
\end{align*}
Hence $(n/2)^{p'+1}< 2^m\leq n^{p'+1}$.
Apply the decomposition \eqref{e:Approx} 
with $ N= 2 ^{n}$, $A=p'+1$ and $K=2^m$.  Then, we have 
\begin{equation*}
\widehat {\mathcal{A}_ {2 ^{n}}} =  \sum_{q=1}^{2^m} \widehat{L _{2 ^{n},q}} + \rho _{2^n}, 
\end{equation*}
where $ \lVert \rho _{2 ^{n}} \rVert _{\infty } \lesssim n ^{- p'}$.  Our first contribution to the term $ H$ is  $H_1=\lvert \widecheck {\rho _{\tau }} \ast f \rvert$. 
Note that by a familiar square function argument, 
\begin{align*}
\lVert H _{1}\rVert _2 ^2 & \leq \sum_{n \geq D_p J ^{1/p'}}  \lVert   \widecheck  {\rho _{2 ^{n}}} \ast f \rVert_2 ^2 
\\
& \lesssim \lVert f\rVert_2 ^2 \sum_{n \geq D_p J ^{1/p'}}   n ^{-2p'} \lesssim J ^{-2+1/p'} \lVert f\rVert_2 ^2 .  
\end{align*}
This satisfies the requirement in \eqref{e:SpH}.  

We continue with the construction of $ H$.  The second contribution is 
\begin{align*}
H_2 & =  \sup _{n  }  \bigl\lvert  \sum_{2 ^j<q\leq 2^m} L _{ 2 ^{n} ,q}   \ast f  \bigr\rvert
\\
& \leq  \sum_{k=j} ^{m}   \sup _{n  }   \bigl\lvert  \sum_{2 ^{k} < q \leq 2 ^{k+1}}   L _{ 2 ^{n} ,q}   \ast  f  \bigr\rvert
\end{align*}
The point of this last line is that the inequality below is a direct consequence of Bourgain's multi-frequency maximal inequality, 
and the bound $ \phi (q) \gtrsim_{p} q^{1-1/(2p')}$: 
\begin{equation} \label{e:multi}
\Bigl\lVert 
 \sup _{n  }   \bigl\lvert  \sum_{2 ^{k} \leq q < 2 ^{k+1}}   L _{ 2 ^{n} ,q}   \ast f \bigr\rvert
\Bigr\rVert_{\ell^2} 
\lesssim  k   \max_{2 ^{k} \leq q < 2 ^{k+1}} \frac{1} {\phi (q)} \cdot \lVert f\rVert_{\ell^2} \lesssim 2^{-k(1-1/p')} \lVert f\rVert_{\ell^2}. 
\end{equation}
Summing this estimate over $ k\geq j$ completes the analysis of the High term.  

\begin{remark}
 One of the main results of Bourgain \cite{MR1019960} is the multi-frequency maximal inequality, a key aspect of discrete Harmonic Analysis. 
 In the form that we have used it in \eqref{e:multi}, see for instance \cite{2018arXiv180309431K}*{Prop. 5.11}.
\end{remark}

The term that remains is the Low term below. 
We appeal to \eqref{e:invert}, to see that  
\begin{equation}\label{e:sparse1}
\Bigl\lvert  \sum_{q=1}^J L _{q, \tau }  \ast f   (x) \Bigr\rvert\leq 
\sum_{q=1}^J \left( \frac{\lvert  c_q ( \cdot )\rvert } {\phi (q)}  \lvert \gamma _{\tau } \ast \widecheck {\eta _s} ( \cdot ) \rvert \right) \ast f (x)
\end{equation}
We need the following simple Lemma concerning $\gamma_{\tau}\ast \widecheck{\eta_s}$.
\begin{lemma}\label{l:measure}
We have
\begin{align*}
\lvert \gamma_{\tau}\ast \widecheck{\eta_s}(y) \rvert\lesssim
\begin{cases}
\tau^{-1} \qquad\qquad \text{if } |y|\leq 4\tau\\
2^{-2k}\, \tau^{-1}\qquad \text{if } |y|\in (2^k\tau, 2^{k+1}\tau],\ \text{ for } k\geq 2.
\end{cases}
\end{align*}
\end{lemma}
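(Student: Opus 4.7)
The plan is to rewrite the convolution as an explicit discrete sum and exploit Schwartz-type decay of $\widecheck{\eta_s}$ on $\mathbb Z$. Since $\gamma_\tau(x) = \tau^{-1}\mathbf{1}_{\{1,\ldots,\tau\}}(x)$, one has
\begin{equation*}
\gamma_\tau \ast \widecheck{\eta_s}(y) = \tau^{-1}\sum_{n=1}^\tau \widecheck{\eta_s}(y-n).
\end{equation*}
First I would establish, for every integer $N \geq 1$ and every $s \geq 1$, the pointwise Schwartz bound
\begin{equation*}
|\widecheck{\eta_s}(m)| \lesssim_N 8^{-s}\bigl(1+|m|/8^s\bigr)^{-N}.
\end{equation*}
This is obtained by writing $\widecheck{\eta_s}(m) = \int_{-1/2}^{1/2}\eta(8^s\xi)e(-m\xi)\,d\xi$, substituting $u = 8^s\xi$, and using that $\eta$ is supported in $[-1/4,1/4]\subset[-8^{s-1}/2,8^{s-1}/2]$ so the integral equals $8^{-s}\widecheck{\eta}_{\mathbb R}(m/8^s)$, with $\widecheck{\eta}_{\mathbb R}$ the usual (Schwartz) Fourier transform of $\eta$ on $\mathbb R$. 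As a consequence, $\|\widecheck{\eta_s}\|_{\ell^1}\lesssim 1$ uniformly in $s$ (take $N=2$).

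For $|y|\leq 4\tau$ I would just apply the crude convolution inequality
\begin{equation*}
|\gamma_\tau \ast \widecheck{\eta_s}(y)| \leq \|\gamma_\tau\|_{\ell^\infty}\|\widecheck{\eta_s}\|_{\ell^1} \lesssim \tau^{-1},
\end{equation*}
which is the first half of the claim.

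For $|y|\in(2^k\tau, 2^{k+1}\tau]$ with $k\geq 2$, the triangle inequality gives $|y-n|\geq |y|-\tau \geq (2^k-1)\tau \geq 2^{k-1}\tau$ for every $n\in\{1,\ldots,\tau\}$. Plugging this into the Schwartz estimate and summing over $n$ yields
\begin{equation*}
|\gamma_\tau \ast \widecheck{\eta_s}(y)| \;\leq\; \tau^{-1}\sum_{n=1}^\tau |\widecheck{\eta_s}(y-n)| \;\lesssim_N\; 8^{-s}\bigl(1+2^{k-1}\tau/8^s\bigr)^{-N}.
\end{equation*}
A case analysis on the ratio $r = 2^{k-1}\tau/8^s$ then reduces this to $2^{-2k}\tau^{-1}$: when $r\geq 1$ one uses the decay $8^{-s}r^{-N}$ with $N$ chosen so that $N(k-1)\geq 2k$ (for example $N=4$ suffices for all $k\geq 2$); when $r<1$ one uses that $8^s > 2^{k-1}\tau$ together with the trivial bound $8^{-s}$.

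The main obstacle is extracting the full $2^{-2k}$ saving rather than the naive $2^{-k}$ that Schwartz decay delivers at the critical threshold $8^s\sim 2^k\tau$. The extra factor must come from a careful choice of $N$ (exploiting $k\geq 2$ in the inequality $N(k-1)\geq 2k$), or, alternatively, from an Euler--Maclaurin comparison of the discrete sum against the continuous integral $\int_0^\tau \widecheck{\eta_s}(y-t)\,dt = F_{\mathbb R}(y/8^s)-F_{\mathbb R}((y-\tau)/8^s)$, where $F_{\mathbb R}(t)=\int_{-\infty}^t\widecheck{\eta}_{\mathbb R}(u)\,du$ satisfies $|F_{\mathbb R}(t)-\mathbf 1_{t\geq 0}|\lesssim_N(1+|t|)^{-N}$; the indicator vanishes since $y\notin[0,\tau]$, and only the rapidly decaying Schwartz tails survive.
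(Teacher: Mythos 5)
Your proof of the first case ($|y|\le 4\tau$) matches the paper. The second case, however, has a genuine gap: you never invoke the constraint $8^s \lesssim \tau$, and without it the lemma as stated is simply false. Indeed, if $8^s \approx 2^{3k/2}\tau$, then for $|y|\approx 2^k\tau$ the arguments $(y-n)/8^s$ are $O(1)$, so $|\widecheck{\eta_s}(y-n)|\approx 8^{-s}$ uniformly in $n$ and therefore $|\gamma_\tau\ast\widecheck{\eta_s}(y)|\approx 8^{-s}\approx 2^{-3k/2}\tau^{-1}$, which is much larger than $2^{-2k}\tau^{-1}$ for $k$ large. Your two proposed remedies do not overcome this. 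In the branch $r=2^{k-1}\tau/8^s \ge 1$, the quantity $8^{-s}r^{-N} = 8^{s(N-1)}/(2^{k-1}\tau)^N$ is only $\le (2^{k-1}\tau)^{-1}$ if all you know is $8^s\le 2^{k-1}\tau$ (i.e.\ $r\geq 1$), no matter how large $N$ is; the inequality $N(k-1)\ge 2k$ by itself buys nothing. In the branch $r<1$, the trivial bound $8^{-s}<2^{-(k-1)}\tau^{-1}$ again yields a single power of $2^{-k}$, not two. The Euler--Maclaurin variant fares the same: when $8^s\gg\tau$ the mean value theorem gives $F_{\mathbb R}(y/8^s)-F_{\mathbb R}((y-\tau)/8^s)\approx (\tau/8^s)\,\widecheck{\eta}_{\mathbb R}(\xi)$ with $|\xi|\lesssim 1$, so $|\gamma_\tau\ast\widecheck\eta_s(y)|\approx 8^{-s}$ and no extra $2^{-k}$ appears.

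The missing ingredient is available from the construction in which the lemma is used. There $q\le J$ and $2^s\le q$, so $8^s\le J^3$, while \eqref{e:sparse_assu} (via the admissibility restriction to $\log\tau(x)>D_pJ^{1/p'}$) forces $\tau(x)>2^{D_pJ^{1/p'}}>J^3\ge 8^s$; the paper's proof states this explicitly. Once $8^s\lesssim\tau$ is in hand, $r\ge 2^{k-1}$ always, your $r<1$ branch is vacuous, and already $N=2$ (as in the paper) gives $8^{-s}(1+r)^{-2}\le 8^s/(2^{k-1}\tau)^2\lesssim 2^{-2k}\tau^{-1}$. So the remedy is not a cleverer choice of $N$ or an integral comparison, but simply recording the hypothesis $8^s\lesssim\tau$ that the surrounding argument supplies.
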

\begin{proof}
The proof for the case $|y|\leq 4\tau$ follows from \eqref{e:measure}. 
Now, assume $|y|\in (2^k\tau, 2^{k+1}\tau]$ for $k\geq 2$.
We have
\begin{align*}
|\gamma_{\tau}\ast \widecheck{\eta_s}(y)|
\leq \frac{1}{8^s \tau}\sum_{z=1}^\tau \lvert \widecheck{\eta}(\frac{y-z}{8^s})\rvert
\lesssim \frac{1}{8^s \tau} \sum_{z=1}^\tau \left(1+(\frac{y-z}{8^s})^2\right)^{-1}
\lesssim \frac{8^s}{2^{2k}\tau^2}
\lesssim \frac{1}{2^{2k}\tau}.
\end{align*}
In the last inequality we used $8^s\leq q^3\leq J^3<2^{D_p J^{1/p'}}<\tau(x)$, due to \eqref{e:sparse_assu} with a proper choice of $D_p$.
\end{proof}

Plugging the estimates in Lemma \ref{l:measure} into \eqref{e:sparse1}, we have
\begin{align*}
\Bigl\lvert  \sum_{q=1}^J L _{q, \tau }  \ast f(x)\Bigr\rvert 
\lesssim&
\frac{1}{\tau} \sum_{|y|\leq 4\tau}\, \sum_{q=1}^J \frac{|c_q(y)|}{\phi(q)} f(x-y)\\
&+ \sum_{k=2}^{\infty} \frac{1}{2^{2k}\tau}\sum_{|y|\leq 2^{k+1}\tau}\, \sum_{q=1}^J \frac{|c_q(y)|}{\phi(q)} f(x-y)\\
\lesssim &
\Bigl[ \sum_{ y=1} ^ {\tau}  \frac{1} {\tau} \Bigl\lvert \sum_{q=1}^J
  \frac{\lvert  c_q ( y )\rvert   } {\phi (q)  }
\Bigr\rvert ^{p'}  \Bigr] 
^{\frac{1}{p'}}    
\Bigl[    \frac{1} {\tau} \sum_{ y=1} ^ {\tau}  f (x-y) \Bigr] ^{\frac{1}{p}} \\
&+\sum_{k=2}^{\infty} \frac{1}{2^k} \Bigl[ \sum_{ y=1} ^ {2^{k+1}\tau}  \frac{1} {2^{k+1}\tau} \Bigl\lvert \sum_{q=1}^J
  \frac{\lvert  c_q ( y )\rvert   } {\phi (q)  }
\Bigr\rvert ^{p'}  \Bigr] 
^{\frac{1}{p'}}   
\Bigl[    \frac{1} {2^{k+1}\tau} \sum_{ y=1} ^ {2^{k+1}\tau}  f (x-y) \Bigr] ^{\frac{1}{p}}\\
\lesssim &J ^{1/p'} \langle f  \rangle _{2E,1} ^{1/p}.  
\end{align*}
We use H\"older's inequality in $ \ell ^{p}$-$\ell ^{p'}$, and use \eqref{e:Ram<} above to gain the factor of $ J ^{1/p'}$.  
Recall that  \eqref{e:Ram<} holds in this setting, since we assumed \eqref{e:sparse_assu}.
Thus, $ J^{p'} < 2^{D_pJ^{1/p'}} <   \tau (x)$, for appropriate choice of constant $ D_p$.  
 Note that admissibility of $ \tau $, namely the condition \eqref{e:admiss},  gives us the estimate in terms of $ \langle f \rangle_{2E,1}$. 
This completes the proof of \eqref{e:SpL}, and completes the proof of the sparse bound. 

%\bibliography{../r/2/BIBS/sparse,../r/2/BIBS/radon,../r/2/BIBS/bib,primes}     
% \bib, bibdiv, biblist are defined by the amsrefs package.
\begin{bibdiv}
\begin{biblist}

\bib{MR937581}{article}{
      author={Bourgain, J.},
       title={On the maximal ergodic theorem for certain subsets of the
  integers},
        date={1988},
        ISSN={0021-2172},
     journal={Israel J. Math.},
      volume={61},
      number={1},
       pages={39\ndash 72},
         url={https://doi-org.prx.library.gatech.edu/10.1007/BF02776301},
      review={\MR{937581}},
}

\bib{MR1209299}{article}{
      author={Bourgain, J.},
       title={Fourier transform restriction phenomena for certain lattice
  subsets and applications to nonlinear evolution equations. {I}.
  {S}chr\"odinger equations},
        date={1993},
        ISSN={1016-443X},
     journal={Geom. Funct. Anal.},
      volume={3},
      number={2},
       pages={107\ndash 156},
         url={https://doi-org.prx.library.gatech.edu/10.1007/BF01896020},
      review={\MR{1209299}},
}

\bib{MR1019960}{article}{
      author={Bourgain, Jean},
       title={Pointwise ergodic theorems for arithmetic sets},
        date={1989},
        ISSN={0073-8301},
     journal={Inst. Hautes \'Etudes Sci. Publ. Math.},
      number={69},
       pages={5\ndash 45},
         url={http://www.numdam.org/item?id=PMIHES_1989__69__5_0},
        note={With an appendix by the author, Harry Furstenberg, Yitzhak
  Katznelson and Donald S. Ornstein},
      review={\MR{1019960}},
}

\bib{MR3892403}{article}{
      author={Culiuc, Amalia},
      author={Kesler, Robert},
      author={Lacey, Michael~T.},
       title={Sparse bounds for the discrete cubic {H}ilbert transform},
        date={2019},
        ISSN={2157-5045},
     journal={Anal. PDE},
      volume={12},
      number={5},
       pages={1259\ndash 1272},
  url={https://doi-org.prx.library.gatech.edu/10.2140/apde.2019.12.1259},
      review={\MR{3892403}},
}

\bib{2019arXiv190705734H}{article}{
      author={{Han}, Rui},
      author={{Lacey}, Michael~T},
      author={{Yang}, Fan},
       title={{Averages along the Square Integers: $\ell^p$ improving and
  Sparse Inequalities}},
        date={2019Jul},
     journal={arXiv e-prints},
       pages={arXiv:1907.05734},
      eprint={https://arxiv.org/abs/1907.05734},
}

\bib{180409260H}{article}{
      author={{Hughes}, K.},
       title={{$\ell^p$-improving for discrete spherical averages}},
        date={2018-04},
     journal={ArXiv e-prints},
      eprint={1804.09260},
}

\bib{I}{article}{
      author={Ionescu, Alexandru~D.},
       title={An endpoint estimate for the discrete spherical maximal
  function},
        date={2004},
        ISSN={0002-9939},
     journal={Proc. Amer. Math. Soc.},
      volume={132},
      number={5},
       pages={1411\ndash 1417},
  url={https://doi-org.prx.library.gatech.edu/10.1090/S0002-9939-03-07207-1},
      review={\MR{2053347}},
}

\bib{MR2188130}{article}{
      author={Ionescu, Alexandru~D.},
      author={Wainger, Stephen},
       title={{$L^p$} boundedness of discrete singular {R}adon transforms},
        date={2006},
        ISSN={0894-0347},
     journal={J. Amer. Math. Soc.},
      volume={19},
      number={2},
       pages={357\ndash 383},
  url={https://doi-org.prx.library.gatech.edu/10.1090/S0894-0347-05-00508-4},
      review={\MR{2188130}},
}

\bib{180509925}{article}{
      author={{Kesler}, R.},
       title={{$\ell^p(\mathbb{Z}^d)$-Improving Properties and Sparse Bounds
  for Discrete Spherical Maximal Averages}},
        date={2018-05},
     journal={ArXiv e-prints},
      eprint={https://arxiv.org/abs/1805.09925},
}

\bib{180906468}{article}{
      author={{Kesler}, R.},
       title={{$\ell^p(\mathbb{Z}^d)$-Improving Properties and Sparse Bounds
  for Discrete Spherical Maximal Means, Revisited}},
        date={2018-09},
     journal={ArXiv e-prints},
      eprint={https://arxiv.org/abs/1809.06468},
}

\bib{181002240}{article}{
      author={{Kesler}, R.},
      author={{Lacey}, M.~T.},
      author={{Mena Arias}, D.},
       title={{Sparse Bound for the Discrete Spherical Maximal Functions}},
     journal={ArXiv e-prints},
      eprint={https://arxiv.org/abs/1810.02240},
}

\bib{181012344K}{article}{
      author={{Kesler}, Robert},
      author={{Lacey}, Michael~T.},
      author={{Mena}, Dario},
       title={{Lacunary Discrete Spherical Maximal Functions}},
        date={2018-10},
     journal={arXiv e-prints},
       pages={arXiv:1810.12344},
      eprint={1810.12344},
}

\bib{2018arXiv181002240K}{article}{
      author={{Kesler}, Robert},
      author={{Lacey}, Michael~T.},
      author={{Mena Arias}, Dar{\'\i}o},
       title={{An Endpoint Sparse Bound for the Discrete Spherical Maximal
  Functions}},
        date={2018Oct},
     journal={arXiv e-prints},
       pages={arXiv:1810.02240},
      eprint={1810.02240},
}

\bib{2018arXiv180309431K}{article}{
      author={{Krause}, Ben},
       title={{Discrete Analogoues in Harmonic Analysis: Maximally Monomially
  Modulated Singular Integrals Related to Carleson's Theorem}},
        date={2018Mar},
     journal={arXiv e-prints},
       pages={arXiv:1803.09431},
      eprint={1803.09431},
}

\bib{180409845L}{article}{
      author={{Lacey}, Michael~T.},
      author={{Kesler}, Robert},
       title={{$ \ell ^\{p\}$-improving inequalities for Discrete Spherical
  Averages}},
        date={2018-04},
     journal={arXiv e-prints},
       pages={arXiv:1804.09845},
      eprint={1804.09845},
}

\bib{2015arXiv151207518M}{article}{
      author={{Mirek}, Mariusz},
      author={{Stein}, Elias~M.},
      author={{Trojan}, Bartosz},
       title={{$\ell^p\big(\mathbb Z^d\big)$-estimates for discrete operators
  of Radon type: Maximal functions and vector-valued estimates}},
        date={2015Dec},
     journal={arXiv e-prints},
       pages={arXiv:1512.07518},
      eprint={1512.07518},
}

\bib{MR3681393}{article}{
      author={Mirek, Mariusz},
      author={Stein, Elias~M.},
      author={Trojan, Bartosz},
       title={{$\ell^p(\Bbb Z^d) $}-estimates for discrete operators of {R}adon
  type: variational estimates},
        date={2017},
        ISSN={0020-9910},
     journal={Invent. Math.},
      volume={209},
      number={3},
       pages={665\ndash 748},
  url={https://doi-org.prx.library.gatech.edu/10.1007/s00222-017-0718-4},
      review={\MR{3681393}},
}

\bib{MR3370012}{article}{
      author={Mirek, Mariusz},
      author={Trojan, Bartosz},
       title={Cotlar's ergodic theorem along the prime numbers},
        date={2015},
        ISSN={1069-5869},
     journal={J. Fourier Anal. Appl.},
      volume={21},
      number={4},
       pages={822\ndash 848},
  url={https://doi-org.prx.library.gatech.edu/10.1007/s00041-015-9388-z},
      review={\MR{3370012}},
}

\bib{2019arXiv190704753T}{article}{
      author={{Trojan}, Bartosz},
       title={{Endpoint estimates for the maximal function over prime
  numbers}},
        date={2019Jul},
     journal={arXiv e-prints},
       pages={arXiv:1907.04753},
      eprint={1907.04753},
}

\bib{MR628618}{book}{
   author={Vaughan, R. C.},
   title={The Hardy-Littlewood method},
   series={Cambridge Tracts in Mathematics},
   volume={80},
   publisher={Cambridge University Press, Cambridge-New York},
   date={1981},
   pages={xi+172},
   isbn={0-521-23439-5},
   review={\MR{628618}},
}

\bib{MR995574}{article}{
      author={Wierdl, M\'{a}t\'{e}},
       title={Pointwise ergodic theorem along the prime numbers},
        date={1988},
        ISSN={0021-2172},
     journal={Israel J. Math.},
      volume={64},
      number={3},
       pages={315\ndash 336 (1989)},
         url={https://doi-org.prx.library.gatech.edu/10.1007/BF02882425},
      review={\MR{995574}},
}

\end{biblist}
\end{bibdiv}

\end{document}